\newcommand{\myhref}[1]{%
	\ifboolexpr{%
		test {\ifhyperref}
		and
		not test {\iftoggle{bbx:eprint}}
		and
		not test {\iftoggle{bbx:url}}
	}
	{\href{\doiorurl}{#1}}
	{#1}%
}
\let\subsectionSymbol\S
\crefname{subsection}{\subsectionSymbol\!}{subsections}
\theoremstyle{plain}
\newtheorem{thm}{Theorem}[section]
\Crefname{thm}{Theorem}{Theorems}
\newtheorem{cor}[thm]{Corollary}
\newtheorem{lem}[thm]{Lemma}
\newtheorem{prop}[thm]{Proposition}
\newtheorem*{thm*}{Theorem}
\theoremstyle{definition}
\newtheorem{defi}[thm]{Definition}
\newtheorem{rem}[thm]{Remark}
\Crefname{rem}{Remark}{Remarks}
\newtheorem{example}[thm]{Example}
\newcommand{\N}{\mathbb{N}}
\newcommand{\Z}{\mathbb{Z}}
\newcommand{\F}{\mathbb{F}}
\newcommand{\R}{\mathbb{R}}
\newcommand{\HS}{\mathrm{HS}}
\newcommand{\HLC}{\mathrm{LHS}}
\newcommand{\LC}{\mathrm{LC}}
\newcommand{\piLC}{\mathrm{LC}^\infty}
\newcommand{\HE}{\mathbb{H}^d}
\renewcommand{\H}{H}
\DeclareMathOperator{\interior}{int}
\DeclareMathOperator{\im}{im}
\DeclareMathOperator{\coker}{coker}
\DeclareMathOperator{\rad}{rad}
\newcommand{\CH}{\check{H}}
\newcommand{\cp}{\check{p}}
\newcommand{\cc}{\check{c}}
\newcommand{\calpha}{\check{\alpha}}
\newcommand{\comega}{\check{\omega}}
\newcommand{\PFD}{\mathrm{PFD}}
\DeclareMathOperator{\Nrv}{Nrv}
\DeclareMathOperator{\Cov}{Cov}
\DeclareMathOperator{\Vietoris}{Vtr}
\DeclareMathOperator{\Balls}{BallCov}
\DeclareMathOperator*{\colim}{colim}
\DeclareMathOperator{\crit}{crit}
\newcommand{\mulDom}{\mathcal{E}}
\newcommand{\Top}{\mathsf{Top}}
\newcommand{\Vect}{\mathsf{Vect}}
\renewcommand{\th}{^{\mathrm{th}}}
\newcommand{\m}{\mathfrak{m}}
\newcommand{\defeq}{\stackrel{\mathrm{def}}{=}}
\title{Persistent homology for functionals}
\author[U.~Bauer]{Ulrich Bauer}
\address{U.B., Technical University of Munich}
\email{\href{mailto:mail@ulrich-bauer.org}{mail@ulrich-bauer.org}}
\author[A.~Medina-Mardones]{Anibal M. Medina-Mardones}
\address{A.M-M., Max Planck Institute for Mathematics \and University of Notre Dame}
\email{\href{mailto:ammedmar@mpim-bonn.mpg.de}{ammedmar@mpim-bonn.mpg.de}}
\author[M.~Schmahl]{Maximilian Schmahl}
\address{M.S., Heidelberg University}
\email{\href{mailto:mschmahl@mathi.uni-heidelberg.de}{mschmahl@mathi.uni-heidelberg.de}}
\subjclass[2020]{55N31, 58E05, 58E12, 54D05, 55N05.}
\keywords{Persistent homology, Morse theory, calculus of variations, persistent diagram, barcode, Morse inequalities, q-tameness, local connectedness, minimal surface}
\date{\today}
\begin{document}

\begin{abstract}
    We introduce topological conditions on a broad class of functionals that ensure that the persistent homology modules of their associated sublevel set filtration admit persistence diagrams, which, in particular, implies that they satisfy generalized Morse inequalities.
    We illustrate the applicability of these results by recasting the original proof of the Unstable Minimal Surface Theorem given by Morse and Tompkins in a modern and rigorous framework.
\end{abstract}
	\maketitle
	
\section{Introduction}

The interplay between the critical set of a function and the topology of its domain is a cornerstone of modern mathematics.
Nowadays, when thinking about the pioneering work of Marston Morse, our first thought probably involves a differentiable function on a closed smooth manifold, but more general settings should also be considered.
Morse theory in the smooth context was masterfully presented in Milnor's famous book on the subject \cite{Milnor.1963}, where he also gave a new proof of Bott's periodicity by applying Morse theory to the energy functional of paths in a Riemannian manifold, which notably goes beyond the compact setting.
Another important example of the use of Morse's insights in an infinite-dimensional context is Floer's work on the Arnold conjecture and its many ramifications in symplectic topology, as surveyed for example in \cite{Salamon.1999}.
Morse himself worked in a very general setting, publishing in the 1930s a pair of papers \cite{Morse.1937, Morse.1940} and a monograph \cite{Morse.1938} in which he established the key results of Morse theory in the broad context defined by semi-continuous functionals on metric spaces.
He called the theory set forth in this body of work \emph{functional topology} and used it to study questions about minimal surfaces motivated by Douglas' solution to Plateau’s Problem \cite{Douglas.1931}.
In particular, Morse and Tompkins \cite{Morse.1939} used these techniques to prove a general \emph{Mountain Pass Lemma} --~an existence result for saddle points~-- applying to functions that are not necessarily continuous (\cref{thm:mountain_pass}).
From this, they deduce their \emph{Unstable Minimal Surface Theorem}, showing the existence of critical points of the Douglas functional that are not local minima (\cref{thm:unstable_minimial_surface}).
In the intervening years, this result has been reproven and generalized in several directions using various techniques, and the problem class is still an active area of research \cite{Struwe.1984,Jost.1990,Jost.1991,Montezuma.2020,Marques.2021}.

Morse's work on functional topology did not have a long lasting impact on minimal surface theory or the calculus of variations in general; possibly in part because, as expressed by Struwe:
\begin{displaycquote}[p.~82]{Struwe.1988}
	The technical complexity and the use of a sophisticated topological machinery [...] tend to make Morse--Tompkins' original paper unreadable and inaccessible for the non-specialist.
\end{displaycquote}
A similar assessment was given by Bott, who writes in \cite[p.~934]{Bott.1980} that the papers \cite{Morse.1937, Morse.1940} ``are not easy reading'' and constitute a ``tour de force'' by Morse.

The intricacies of Morse's development notwithstanding, many of his ideas have subsequently resurfaced and flourished in other domains.
In particular, in applied topology and symplectic geometry, several key insights of Morse have been independently rediscovered as part of the development of \emph{persistent homology}, a technique that provides robust and efficiently computable invariants of filtered spaces using the functorial properties of homology.
Its success in these fields has motivated a refined abstract theory of persistence that lies in the intersection of geometry, topology, and representation theory.

The homology of a filtered space is an example of what is referred to as a \emph{persistence module}, a functor to vector spaces from the real numbers considered as a poset category.
In many important cases, a persistence module $M$ admits an essentially unique decomposition into indecomposable direct summands, and the structure of this decomposition yields a complete invariant of $M$ known as its \emph{persistence diagram}.
The set of all persistence diagrams can be organized into a metric space.
This often allows to recast geometric questions about general filtered spaces in a combinatorial metric model, since the passage via the homology construction to this metric space is Lipschitz, a statement commonly known as the \emph{stability} of persistence diagrams.

The most remarkable connections between functional topology and persistence theory come from Morse's paper \cite{Morse.1940}, where he developed the theory of \emph{caps} and their \emph{spans}.
They capture much of the same information as the modern notion of persistence diagram, including concepts such as the persistence or birth and death of a homology class, although Morse's results still fall short of yielding global decompositions of persistence modules.
Morse used his theory of caps to study functionals on a metric space by analyzing the evolution of the topology of their sublevel sets.
A key tool to this end is a version of his eponymous inequalities for cap numbers, which expands their usual version in the compact and smooth setting.
In this work, using persistence diagrams, we generalize the definition of these cap numbers to persistence modules and prove the existence of Morse inequalities for a large class of them (\cref{t:inequalities}).
Our approach makes these inequalities accessible in new contexts beyond those originally covered by functional topology.

Given the importance of persistence diagrams, in particular for stating and proving Morse inequalities, our focus will then be on the study of topological properties ensuring their existence for a broad class of filtered spaces and homology constructions.
For general persistence modules, a well studied condition for the existence of persistence diagrams is \emph{q-tameness} \cite{Chazal.2016a,Chazal.2016b}, which simply states that all linear maps between different real values in the persistence module have finite rank.
This condition is satisfied by a large class of important constructions; for example, the Vietoris--Rips or \v Cech persistent homology of a totally bounded metric space is q-tame \cite{Chazal.2014}.
The motivating question can then be reformulated as asking for topological conditions on a filtered space that ensure its persistent homology to be q-tame.
We now present the answers provided in the present work.

The persistence module associated to a filtration depends on the homology construction used, which, even when agreeing on cellular spaces, need not coincide for general topological spaces.
We restrict attention to homotopy invariant functors to the category of graded vector spaces satisfying the Mayer--Vietoris property for either open or closed sets, the primary examples being singular homology or \v{C}ech homology, respectively.

The sublevel set filtration $f_{\leq t} = f^{-1}(-\infty, t]$ of a real valued function $f$ is called \emph{locally homologically small} or \emph{$\HLC$} for a given homology theory if for any $x \in X$, any neighborhood $V$ of $x$, and any pair of indices $s,t$ with $f(x) < s < t$, there is a neighborhood $U$ of $x$ with $U \subseteq V$ such that the inclusion $f_{\leq s} \cap U \hookrightarrow f_{\leq t} \cap V$ is \emph{homologically small}; that is to say, the induced map in homology has finite rank in every degree.
We say that a sublevel set filtration is \emph{compact} if all sublevel sets are compact Hausdorff spaces.
We can now state our main result (\cref{t:local connectedness implies q-tameness}):

\begin{thm*}
	If the sublevel set filtration of a function $f \colon X \to \R$ is compact and	$\HLC$, then its persistent homology is also q-tame.
\end{thm*}

\noindent We also introduce a weaker local-connectivity condition that can be used instead of $\HLC$ in the statement above if the filtration is defined by a continuous functional (\cref{c:q-tameness for continuous functions}).

To illustrate the applicability of our results we return to the original setting of the Douglas functional that motivated the development of functional topology.
This functional satisfies the hypotheses of \cref{t:local connectedness implies q-tameness}, so its associated persistent \v{C}ech homology is indeed q-tame and admits a persistence diagram.
From this, one can easily deduce the existence of an unstable minimal surface.
As it turns out, the local connectivity conditions proposed by Morse \cite{Morse.1938,Morse.1939,Morse.1940} are not sufficient for this purpose, which we illustrate by a counterexample (\cref{c:counterexample}).

\subsection*{Summary}

The primary contribution of this work consists in a modern development of the homological aspects of Morse's functional topology from the perspective of persistence theory.
We adjust several key definitions and prove stronger statements -- including a generalized version of the Morse inequalities -- in order to allow for novel uses of persistence techniques in the calculus of variations.
We provide sufficient conditions for a lower semicontinuous function to have q-tame persistent sublevel set homology, and hence to admit a persistence diagram.
As an application of these results, we correct an inaccuracy in a result by Morse, which was employed in the proof of the Unstable Minimal Surface Theorem given by Morse and Tompkins.

\subsection*{Outline}

In \cref{s:persistence} we recall the foundations of persistence theory, considering the persistent homology of a sublevel set filtration as the key example.
We present a persistence-theoretic point of view on Morse inequalities in \cref{s:inequalities}.
It generalizes both their versions in the smooth and compact setting as well as the one used in functional topology.
The main result of the present work is presented in \cref{s:connectivity}, where we define two natural notions of local-connectivity for a sublevel set filtration and show under what circumstances they imply q-tameness of its associated persistence module.
We close in \cref{s:surfaces} with a historical overview of Morse--Tompkins' application of functional topology to minimal surface theory, and we explore its relation to our results.
\cref{s:vietoris} contains a brief discussion on the definitions of Vietoris and \v{C}ech homology and their equivalence for compact metric spaces.

\section{Filtered spaces and persistence theory} \label{s:persistence}

In this section we present an overview of the theory of persistence through filtrations by sublevel sets of real-valued functions on general topological spaces.
For a detailed exposition we refer to \cite{Chazal.2016a, Oudot.2015, Polterovich.2020} and for applications of this theory in symplectic geometry to
\cite{Polterovich.2016, Usher.2016, LeRoux.2018, Shelukhin.2019}.

The ``pipeline'' of topological persistence traverses through geometry, algebra and discrete mathematics as follows:
Given a space $X$ filtered by the sublevel sets of a function $f$, the application of some homology theory in degree $n$ with coefficients in a field, or, more generally, a functor from topological spaces to vector spaces, produces a \emph{persistence module}, an algebraic object equipped with a structure theory leading in favorable cases to a powerful invariant called \emph{persistence diagram}.

These invariants are key to applications.
For example, in the next section we will see how they lead to a generalization of the classical Morse inequalities.
Much of their usefulness results from a remarkable fact known as \emph{stability}, stating that the map from functions on $X$ with the supremum norm to the set of all persistence diagrams is 1-Lipschitz with respect to a certain natural metric called \emph{bottleneck distance} \cite{Cohen-Steiner.2007}.

In more detail, consider a space $X$ and a function $f \colon X \to \R$.
Unless stated otherwise, the functions we consider need not be continuous.
We pass to filtered spaces by considering the \emph{sublevel set filtration $f_{\leq \bullet}$ of $X$ induced by $f$}, which is defined by
\begin{equation*}
f_{\leq t} = f^{-1}(-\infty, t].
\end{equation*}

For the next step in the persistence pipeline, one needs a coherent assignment of a vector space to any topological space, or, more precisely, a functor from the category $\Top$ of topological spaces to the category $\Vect$ of vector spaces over a fixed field $\F$.
The typical choices are given by \emph{homology theories}, which for now are only assumed to be $\Z$-graded families $\H = (\H_d)_{d \in \Z}$ of \emph{homotopy invariant functors}, meaning that they assign the same morphism to homotopic maps.
Of particular importance to us are \v{C}ech homology \cite[Section IX--X]{Eilenberg.1952} with coefficients in $\F$, and homology theories in the sense of Eilenberg--Steenrod \cite[Section I]{Eilenberg.1952}, such as singular homology, again with coefficients in $\F$ \cite{Eilenberg.1944}.

By definition, applying to a filtered space $\{X_t\}_{t \in \R}$ a functor from $\Top$ to $\Vect$ yields, for every $t \in \R$ a vector space~$M_t$ and for any pair~$s, t \in \R$ with~$s \leq t$, a linear map~$M_{s,t} \colon M_s \to M_t$ such that $M_{t,t}$ is the identity and the composition $M_{s,t} \circ M_{r,s}$ is equal to $M_{r,t}$ for any triple $r \leq s \leq t$.
In other words, we obtain a functor $M$ from the real numbers, considered as a poset category, to the category of vector spaces.
Such functors are called \emph{persistence modules}.
A morphism of persistence modules $\varphi \colon M \to N$ is a natural transformation, i.e., an assignment of a linear map $\varphi_t \colon M_t \to N_t$ for every $t \in \R$ making the diagram
\begin{equation*}
\begin{tikzcd}
M_{s} \arrow[r, "M_{s,t}"] \arrow[d] & M_{t} \arrow[d] \\
N_{s} \arrow[r, "N_{s,t}"] & N_{t}
\end{tikzcd}
\end{equation*}
commute for every pair $s \leq t$.

Since most examples of interest arise from applying a homology theory to a filtered space, we also consider \emph{graded persistence modules}, which are collections of persistence modules indexed by the integers.
To lighten the presentation of the theory of persistence, in what follows we will solely focus on persistence modules, omitting straightforward generalizations to their graded counterparts.

Functorial constructions defined on the category of vector spaces over the field~$\F$ can be transferred to the category of persistence modules by applying them pointwise.
For example, the kernel and cokernel of a morphism as well as the direct sum of persistence modules are well-defined.
Persistence modules that are \emph{indecomposable}, i.e., those that have only trivial direct sum decompositions, play an important role in persistence theory.
A rich family of indecomposable persistence modules is given by \emph{interval modules}, which for intervals $I \subseteq \R$ are defined by
\begin{equation*}
C(I)_t =
\begin{cases}
\F & \text{if } t \in I, \\
0 & \text{otherwise},
\end{cases}
\qquad \qquad
C(I)_{s, t} =
\begin{cases}
\operatorname{id}_{\F} & \text{if } s, t \in I, \\
0 & \text{otherwise}.
\end{cases}
\end{equation*}
These indecomposable interval modules can be used as building blocks for \emph{barcode modules}, which are direct sums of interval modules.
Given a barcode module $\bigoplus_{\lambda \in \Lambda} C(I_{\lambda})$, the associated multiset of intervals $\{I_{\lambda}\}_{\lambda \in \Lambda}$ is known as its \emph{barcode}.
By a version of the Krull--Remak--Schmidt--Azumaya Theorem \cite{Azumaya.1950} (see also \cite[Theorem 2.7]{Chazal.2016a} for a specialization to barcode modules), two isomorphic barcode modules have the same barcodes up to a choice of the index set $\Lambda$.
Thus, if a persistence module $M$ is provided with an isomorphism to a barcode module, referred to as a \emph{barcode decomposition}, the associated barcode is a complete isomorphism invariant of $M$.
Hence, understanding which persistence modules admit barcode decompositions is key.

The most commonly used existence result for barcode decompositions is due to Crawley-Boevey \cite{Crawley-Boevey.2015}.
It guarantees the existence of a barcode decomposition for any \emph{pointwise finite dimensional} ($\PFD$) persistence module, which is a persistence modules $M$ such that $M_t$ is a finite dimensional vector space for all~$t \in \R$.
Unfortunately, the $\PFD$ condition is too restrictive for many purposes.
In particular, it is unsuited for the applications of Morse and Tompkins in minimal surface theory.
An appropriate weakening of $\PFD$ for more general settings is the notion of \emph{q-tameness}.
A persistence module $M$ is q-tame if the rank of the map $M_{s,t} \colon M_s \to M_t$ is finite for all $s < t$ \cite{Chazal.2016a}.
As exemplified by the infinite product of interval modules $\prod_{n \in \N_{> 0}} C([0,1/n))$, not every q-tame persistence module admits a barcode decomposition in the above sense.
Yet, there are multiple ways to regularize q-tame persistence modules in order to obtain invariants similar to barcodes \cite{Chazal.2016a, Chazal.2016b, Schmahl.2021}.
We briefly recall the approach from \cite{Chazal.2016b}.

A persistence module $M$ is called \emph{ephemeral} if the maps $M_{s,t} \colon M_s \to M_t$ are zero for all $s < t$.
The \emph{radical} $\rad M$ of a persistence module $M$ is the unique minimal submodule of $M$ such that the cokernel of the inclusion $\rad M \hookrightarrow M$ is an ephemeral persistence module.
More explicitly, we have $(\rad M)_t = \sum_{s<t}\im M_{s,t}$.
As an example, the radical of the infinite product $\prod_{n \in \N_{> 0}} C([0,1/n))$ is the direct sum $\bigoplus_{n \in \N_{> 0}} C((0,1/n))$.
If $M$ is q-tame, then its radical admits a barcode decomposition \cite[Corollary~3.6]{Chazal.2016b},
with the associated barcode describing the isomorphism type of~$M$ ``up to ephemerals".
This can be formalized by constructing the \emph{observable category of persistence modules}, which is equivalent to the quotient of the category of persistence modules by the full subcategory of ephemeral persistence modules.
The barcode of the radical of a q-tame persistence module $M$ is then a complete invariant of $M$ in the observable category.

Intuitively, one may think of the observable category as forgetting all information in persistence modules that does not persist over a non-zero amount of time.
In certain situations, no information is lost in this process, for example if the persistence module $M$ can be assumed to be \emph{continuous from above at $s$} for all~$s \in \R$, i.e., $M_{s} \to \lim_{s < t} M_{t}$ is an isomorphism for all~$s \in \R$ \cite{Schmahl.2021}.
Alternatively, one may also impose the condition that $M$ be \emph{continuous from below at $t$} for all~$t \in \R$, i.e., $\colim_{s < t} M_{s} \to M_{t}$ is an isomorphism for all~$t \in \R$.
Continuity from above is satisfied in Morse's setting of studying compact sublevel set filtrations with \v{C}ech homology; in this setting, both the filtration as well as the resulting persistence module are continuous from above.
In terms of barcodes, continuity from above amounts to only admitting intervals that are closed on the left and open on the right.
One can also simply disregard the distinction between open, closed and half-open intervals in a barcode $\{I_{\lambda}\}_{\lambda \in \Lambda}$, which leads to another invariant that is more convenient in many settings, including our formulation of generalized Morse inequalities.
This invariant, known as the \emph{persistence diagram}, is defined as the multiset given by the \emph{multiplicity function} $\m \colon \mulDom \to \N$ that associates to an element in
\begin{equation*}
\mulDom =
\big\{ (p,q) \mid p \in \R \cup \{-\infty\}, \ q \in \R \cup \{+\infty\}, \ p < q \big\}
\end{equation*}
the cardinality of the set of intervals with lower bound $p$ and upper bound $q$, $\{ \lambda \in \Lambda \mid \inf I_{\lambda} = p,\ \sup I_{\lambda} = q\}$.
Since the type of interval is irrelevant in the observable category, the persistence diagram associated to the barcode of the radical is still a complete invariant for q-tame modules in the observable category.
Stated explicitly, we have the following.

\begin{thm}[\cite{Chazal.2016a, Chazal.2016b}] \label{t:q-tame modules have barcodes}
	Every q-tame persistence module has a unique persistence diagram that completely determines its isomorphism type in the observable category.
\end{thm}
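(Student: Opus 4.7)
The plan is to build the persistence diagram of a q-tame module $M$ from the barcode decomposition of its radical, and to interpret completeness as the statement that two q-tame modules are isomorphic in the observable category if and only if their persistence diagrams coincide. Since the cokernel of $\rad M \hookrightarrow M$ is ephemeral by definition, $M$ and $\rad M$ already agree in the observable category, so I may replace $M$ by $\rad M$. The module $\rad M$ is still q-tame (the rank of each transition map is bounded by that of the corresponding map of $M$), hence by Corollary~3.6 of \cite{Chazal.2016b} it admits a barcode decomposition $\rad M \cong \bigoplus_{\lambda \in \Lambda} C(I_\lambda)$. The Krull--Remak--Schmidt--Azumaya theorem recalled in the excerpt implies that the multiset $\{I_\lambda\}_{\lambda \in \Lambda}$ is uniquely determined up to reindexing, so the persistence diagram obtained from it via the multiplicity function $\m$ is a well-defined invariant of $M$, establishing both existence and uniqueness.

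For completeness, the direction from observable isomorphism to equality of diagrams follows from the fact that every observable isomorphism $M \to N$ induces an honest isomorphism $\rad M \cong \rad N$, because radicals admit no nontrivial ephemeral sub- or quotient modules and the observable category can be presented as a calculus of fractions inverting precisely such modifications. The converse requires producing an observable isomorphism between two q-tame modules that share a persistence diagram, for which I would set up the canonical barcode module $M^{\m} := \bigoplus_{(p,q) \in \mulDom} C\big([p,q)\big)^{\oplus \m(p,q)}$ and prove the key lemma that $\rad M$ and $M^{\m}$ are observable isomorphic whenever $M$ has persistence diagram $\m$. This lemma reduces to a pointwise claim: any interval module $C(I)$ is observable isomorphic to $C\big([\inf I, \sup I)\big)$, since the two differ only at the endpoints $\inf I$ and $\sup I$, and any such discrepancy is an ephemeral sub- or quotient module and hence invisible in the observable category.

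The main obstacle I anticipate is bookkeeping across the potentially uncountable direct sums appearing in the barcode decomposition: one has to verify that the pointwise observable isomorphisms of individual interval modules assemble into a single observable isomorphism at the level of barcode modules, and that the radical construction is well behaved under passage to the observable category. Both steps are technical rather than conceptual and can be handled by working with explicit representatives of morphisms in the quotient category; the remainder of the argument is then largely a transcription into the present notation of statements already available in \cite{Chazal.2016a, Chazal.2016b}.
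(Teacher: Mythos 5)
The paper does not prove this theorem itself; it is cited to Chazal et al., and the paragraphs immediately preceding the statement give precisely the sketch you reconstruct (pass to the radical, apply the barcode decomposition from Corollary~3.6 of \cite{Chazal.2016b}, invoke Krull--Remak--Schmidt--Azumaya for uniqueness, and use that the observable category identifies modules that agree up to ephemerals and endpoint conventions). Your proposal is correct and follows essentially the same route as the cited argument, though your justification for the direction ``observable isomorphism implies $\rad M \cong \rad N$'' leans on the assertion that radicals admit no nontrivial ephemeral submodules, which is less immediate than the absence of ephemeral quotients and is handled more carefully in the reference.
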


We have thus seen how to obtain a persistence diagram from a real-valued function $f$ by applying a functor $\H \colon \Top \to \Vect$ to its sublevel set filtration and considering the persistence diagram of the resulting persistence module $\H(f_{\leq \bullet})$, which is well-defined provided that $\H(f_{\leq \bullet})$ is q-tame.
In this case, we will also call the function $f$ and the filtration $f_{\leq \bullet}$ \emph{q-tame} with respect to the functor~$\H$.
If $\H$ is a homology theory, we will call $\H(f_{\leq \bullet})$ the \emph{persistent homology} of the filtration~$f_{\leq \bullet}$.

As mentioned earlier, the passage from real-valued functions on a topological space to persistence diagrams is 1-Lipschitz for appropriate metrics.
We now provide some more details regarding this result for context, noting that they are not used in the present work.
On the space of real-valued functions one considers the metric induced by the supremum norm, and on the space of persistence diagrams the \emph{bottleneck distance}.
This metric expresses the distance between persistence diagrams by a matching of their points, counted with multiplicities, which is optimal with respect to the $L^{\infty}$-distance on $\mulDom$: two diagrams are within distance $\delta$ if any two matched points are within $L^{\infty}$-distance $\delta$, and any unmatched point has $L^{\infty}$-distance at most $\delta$ to the diagonal $\{(p,q) \in \R^2 \mid p = q\}$.

The most general stability result is shown by considering as an intermediate step the metric spaces of filtrations and persistence modules equipped with the \emph{interleaving distance}.
Given two functors $M,N \colon \R \to \mathsf C$, where $\mathsf C$ is typically $\Top$ or $\Vect$, a $\delta$-interleaving between $M$ and $N$ consists of a pair of natural transformations $(M_t \to N_{t+\delta})_t$ and $(N_t \to M_{t+\delta})_t$ from one diagram to a shifted version of the other and vice versa, which compose to the structure maps $(M_t \to M_{t+2\delta})_t$ and $(N_t \to N_{t+2\delta})_t$.
Clearly, the case $\delta=0$ describes an isomorphism, and the infimum of $\delta$ for which $M$ and $N$ admit a $\delta$-interleaving is defined as the interleaving distance between them.
$M$ and $N$ are isomorphic in the observable category if and only if they have interleaving distance $0$.
As it turns out, the stability of persistence barcodes can then be described as a sequence of 1-Lipschitz transformations: From functions (with the supremum norm) to filtrations by sublevel sets (with the interleaving distance), to persistence modules (with the interleaving distance), and finally to persistence diagrams (with the bottleneck distance).
In this approach, by far the most difficult step is showing that passing from persistence modules to persistence diagrams is stable, a result which is known as the Algebraic Stability Theorem \cite{Chazal.2009, Bauer.2015, Chazal.2016a}.

\section{Generalized Morse inequalities} \label{s:inequalities}

In this section we prove that $q$-tame persistence modules satisfy a general version of Morse inequalities, specializing to the usual Morse inequalities in the smooth context as well as to the version used by Morse and Tompkins to prove their Unstable Minimal Surface Theorem.
We deduce these general inequalities from the existence of persistence diagrams as reviewed in \cref{t:q-tame modules have barcodes}.

First recall that for a Morse function $f$ on a closed smooth manifold $X$, the classical Morse inequalities \cite{Morse.1925} state that for any non-negative integer~$n$ the following inequality holds:
\begin{equation} \label{e:classical morse inequalities}
\sum_{d=0}^n \ (-1)^{n-d} \big( c_{d}(f) - \beta_{d}(X) \big) \ \geq \ 0,
\end{equation}
where $c(f)$ is the number of critical points of $f$ with index $d$ and $\beta_{d}(X)$ is the $d\th$ Betti number of $X$.

If no two critical points of $f$ have the same value, the critical points are in natural one-to-one correspondence with the critical values, which, in turn, are in one-to-one correspondence with the homological changes in the sublevel set filtration of $f$, i.e., the endpoints of the intervals appearing in the barcode of the persistent homology of $f_{\leq \bullet}$.
More precisely, an index $d$ critical point either kills an existing homology class, in which case it corresponds to the right endpoint of an interval in the barcode of $H_{d-1}(f_{\leq \bullet})$, or it gives rise to a new homology class, in which case it corresponds to the left endpoint of an interval in the barcode of $H_d(f_{\leq \bullet})$.

The Betti numbers of $X$ may also be expressed in terms of barcodes, as they agree with the number of intervals that extend to $+\infty$.
Thus, the above Morse inequalities can be expressed entirely in terms of the barcode (or persistence diagram) of the persistent homology of the sublevel set filtration of the function, which encodes the homological changes in the filtration.

This approach of counting homological changes instead of critical points is also what Morse used in the non-smooth setting of functional topology.
To keep track of the number of $d$-dimensional homological events at filtration value $t$ that persist for at least time $\epsilon > 0$ but not indefinitely, Morse \cite{Morse.1940} defined the $(d, t, \epsilon)$-\emph{cap numbers} of a filtration.
The definition given by Morse is specific to Vietoris homology and implicitly relies on the fact that the resulting persistence module is continuous from above.
Expressed in terms of persistence barcodes, the $(d, t, \epsilon)$-cap number correspond to the number of bars in the $d\th$ barcode with left endpoint~$t$ and length greater than $\epsilon$, plus the number of bars in the $(d-1)\th$ barcode with right endpoint~$t$ and length greater than $\epsilon$.
In the compact smooth setting, for sufficiently small~$\epsilon$, the $(d, t, \epsilon)$-cap number equals the number of critical points of index~$d$ and value~$t$, which either create homology in degree $d$ or destroy homology in degree $(d-1)$.
In \cite[Corollary~12.3]{Morse.1940}, Morse proves a version of his eponymous inequalities using cap numbers as a replacement for numbers of critical points, with the stated goal of making the inequalities applicable in settings where the function may not be smooth or the number of critical points may not be finite.

We now take a more general persistence-based approach that allows us to go beyond the setting of Vietoris homology.
Working entirely in the algebraic setting, we will fix a graded q-tame persistence module $M$.
Of course, one may think of $M$ as the persistent homology of a q-tame filtration for any choice of homology theory, but $M$ could also arise, for example, as the filtered Floer homology of some Hamiltonian on a symplectic manifold.
By \cref{t:q-tame modules have barcodes}, $M$ has a persistence diagram in every degree $d$, with multiplicity function denoted $\m_d \colon \mulDom \to \N$.
In analogy to Morse's definitions, we may define, for an integer $d$ and real numbers $t$ and $\epsilon > 0$, the $(d, t, \epsilon)$-\emph{cap number} of our graded q-tame persistence module $M$ in terms of its persistence diagram as
\begin{equation*}
c_{d}^{\epsilon}(t) \ =
\alpha_{d}^{\epsilon}(t) + \omega_{d-1}^{\epsilon}(t),
\end{equation*}
where 
\begin{align*}
\alpha_{d}^{\epsilon}(t) &= \sum_{\substack{q \in \R \cup \{\infty\} \\ q - t > \epsilon}} \m_d(t, q),
&
\omega_{d}^{\epsilon}(t) &= \sum_{\substack{p \in \R \cup \{-\infty\} \\ t - p > \epsilon}} \m_{d}(p, t) 
\end{align*}
are the \emph{number of births} and the \emph{number of deaths}, respectively, in degree $d$, at parameter $t$, and with persistence greater than~$\epsilon$.
Note that finiteness of these quantities is ensured by the q-tameness of $M$ and the use of a non-zero $\epsilon$ bounding below the persistence of the considered features.
To see the necessity of this second condition, consider the q-tame persistence module given by the infinite product $\prod_{n \in \N_{> 0}} C([0,1/n))$ whose cap numbers $c^{\epsilon}(0)$ tend to $\infty$ as $\epsilon$ tends to 0.

Whenever the sums below are well-defined, we also consider the \emph{$(d,\epsilon)$-cap numbers}
\[
c_{d}^{\epsilon} = \sum_{t \in \R} c_{d}^{\epsilon}(t)  = \alpha_{d}^{\epsilon} + \omega_{d-1}^{\epsilon},
\]
where
\begin{align*}
\alpha_{d}^{\epsilon} &= \sum_{t \in \R} \alpha_{d}^{\epsilon}(t) = \sum_{\substack{(p,q) \in \mulDom \\ q - p > \epsilon \\ p \neq -\infty}}\m_d(p,q),
&
\omega_{d}^{\epsilon} &= \sum_{t \in \R} \omega_{d}^{\epsilon}(t) = \sum_{\substack{(p,q) \in \mulDom \\ q - p > \epsilon \\ q \neq \infty}} \m_{d}(p,q)
\end{align*}
are the \emph{total number of births} and the \emph{total number of deaths}, respectively, in degree~$d$ and with persistence greater than~$\epsilon$.

An important setting where all cap numbers are well-defined is when $M$ is the persistent homology of the sublevel set filtration of a bounded function.
A more general statement can be made using the following notion.

\begin{defi} \label{d:initially and eventually constant}
	A persistence module $M$ is said to be \emph{initially constant} if there is $s \in \R$ such that $M_{r,s}$ is an isomorphism for all $r \leq s$.
	Similarly, it is said to be \emph{eventually constant} if there is $u \in \R$
	such that $M_{u,v}$ is an isomorphism for all $u \leq v$.
\end{defi}

\begin{thm} \label{t:cap numbers well defined}
	Let $M$ be a q-tame persistence module that is both initially and eventually constant.
	If $\m$ is the multiplicity function of the persistence diagram of~$N$,
	then for each $\epsilon > 0$,
	\begin{equation*}
	\sum_{ \substack{ (p,q) \in \mulDom \\ q-p > \epsilon } } \m (p,q) < \infty.
	\end{equation*}
\end{thm}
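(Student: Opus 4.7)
The plan is to pass to the barcode decomposition of $\rad N$, which exists by \cref{t:q-tame modules have barcodes}, and bound the number of long bars in three disjoint regimes: those with $\inf = -\infty$, those with $\sup = +\infty$, and those with both endpoints finite but differing by more than $\epsilon$.

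I would first verify that $\rad N$ inherits both constancy properties from $N$. If $N_{u,t_0}$ is an isomorphism for all $u \leq t_0$, then for any $s < u \leq t_0$ the factorization $N_{s,t_0} = N_{u,t_0} \circ N_{s,u}$ forces $N_{s,u}$ to be an isomorphism as well, so $(\rad N)_u = \sum_{s<u} \im N_{s,u} = N_u$ and similarly $(\rad N)_{t_0} = N_{t_0}$. The structure map between them therefore coincides with $N_{u,t_0}$ and is an isomorphism. Writing $\rad N \cong \bigoplus_{\lambda \in \Lambda} C(I_\lambda)$, the isomorphism condition examined summand by summand forces the equality $\{\lambda : u \in I_\lambda\} = \{\lambda : t_0 \in I_\lambda\}$ for every $u \leq t_0$, whence for each $\lambda$ either $I_\lambda \supseteq (-\infty, t_0]$ (so $\inf I_\lambda = -\infty$) or $I_\lambda \cap (-\infty, t_0] = \emptyset$ (so $\inf I_\lambda \geq t_0$). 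The symmetric argument at $t_1$ yields the dual dichotomy, and q-tameness combined with the isomorphism $N_{u,t_0}$ also gives $\dim N_{t_0} = \operatorname{rank} N_{u,t_0} < \infty$ and likewise $\dim N_{t_1} < \infty$.

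I would then split the sum into the three regimes. Bars with $\inf I_\lambda = -\infty$ all contain $t_0$, so there are at most $\dim N_{t_0}$ of them; symmetrically, at most $\dim N_{t_1}$ bars satisfy $\sup I_\lambda = +\infty$. The remaining bars have finite endpoints lying in $[t_0, t_1]$ by the structural step. Choose grid points $t_0 = s_0 < s_1 < \cdots < s_K = t_1$ with $s_{i+1} - s_i < \epsilon/2$. A short estimate, using that the open interval $(\inf I_\lambda, \sup I_\lambda)$ has length greater than $\epsilon > 2\,(s_{i+1}-s_i)$, shows every such bar contains some consecutive pair $s_i, s_{i+1}$ in its open interior, and hence contributes to $\operatorname{rank} N_{s_i, s_{i+1}}$, which is finite by q-tameness. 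Summing over $i$ yields the bound $\sum_{i=0}^{K-1} \operatorname{rank} N_{s_i, s_{i+1}}$ on this regime, and adding the three finite contributions completes the proof.

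The main obstacle is the structural step: translating the constancy hypotheses into the sharp dichotomy on bar endpoints while simultaneously extracting finite-dimensionality of $N_{t_0}$ and $N_{t_1}$, together with ensuring that $\rad N$ itself still enjoys the constancy that $N$ does. Once this is in place, the three-regime counting reduces to a routine application of q-tameness on a finite grid.
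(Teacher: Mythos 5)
Your argument is correct in substance and runs parallel to the paper's: both split the count around the interval $[t_0,t_1]$, dispose of the infinite-endpoint contributions using q-tameness near $t_0$ and $t_1$, and bound the finite-but-long contributions by a finite family of rank bounds. The paper works directly with the multiplicity function $\n$, showing that $\n(p,q)=0$ off a certain region forced by the constancy hypotheses and then covering the compact triangle $T^{\epsilon}=\{t_0\le p<q\le t_1,\ q-p\ge\epsilon\}$ by finitely many open quadrants $Q(x_i,x_i+\epsilon/2)$; your grid $s_0<\dots<s_K$ with spacing under $\epsilon/2$ is precisely that quadrant cover in disguise, since a bar having both $s_i$ and $s_{i+1}$ in its open interior is exactly the condition $(\inf I_\lambda,\sup I_\lambda)\in Q(s_i,s_{i+1})$. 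The extra work you take on, which the paper avoids, is the explicit passage to the barcode of $\rad N$ and the verification that the radical inherits the constancy hypotheses.

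That extra step has one wrinkle worth flagging. Your argument for initial constancy of $\rad N$ is fine: for $u\le t_0$ you get $(\rad N)_u=N_u$ and the structure maps into $N_{t_0}$ are isomorphisms. But the ``symmetric argument at $t_1$'' does not literally hold, because the radical is not symmetric in $s$ and $t$: it is built from images of maps coming \emph{from the left}. Concretely, $(\rad N)_{t_1}=\sum_{s<t_1}\im N_{s,t_1}$ can be a proper subspace of $N_{t_1}$ even when $N_{t_1,u}$ is an isomorphism for all $u\ge t_1$ (take $N_t=\F$ for $t\ge 0$ and $N_t=0$ otherwise, with $t_1=0$). In that case $(\rad N)_{t_1,u}$ fails to be surjective onto $(\rad N)_u=N_u$. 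The fix is small: either replace $t_1$ by any $t_1'>t_1$, for which $N$ is still eventually constant and $(\rad N)_{t_1'}=N_{t_1'}$; or observe directly that $(\rad N)_{u,v}$ is an isomorphism for all $t_1<u<v$, which already suffices for the summand-by-summand index-set argument and yields the dichotomy $\sup I_\lambda=+\infty$ or $\sup I_\lambda\le t_1$. With this adjustment your three-regime count goes through.
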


\begin{proof}
	Let $s, u \in \R$ be as in \cref{d:initially and eventually constant}.
	We split the sum whose finiteness we want to show in two parts
	\begin{equation*}
	\sum_{ \substack{ (p,q) \in \mulDom \\ q-p > \epsilon } } \m (p,q) \ = \!\!
	\sum_{ \substack{ (p,q) \in \mulDom \setminus T \\ q-p > \epsilon } } \m (p,q) \ +
	\sum_{ \substack{ (p,q) \in T \\ q-p > \epsilon } } \m (p,q)
	\end{equation*}
	where $T$ denotes the triangle
	\begin{equation*}
	T = \{(p,q) \in \mulDom \mid s \leq p < q \leq u\}.
	\end{equation*}

	For the first summand, observe that since $M$ is constant below $s$ and constant above $u$, we have $\m(p,q) = 0$ whenever one of $-\infty < p < s$ or $q < s$ or $p > u$ or $u < q < \infty$ holds.
	This implies
	\begin{equation*}
	\sum_{ \substack{ (p,q) \in \mulDom \setminus T \\ q-p > \epsilon } } \m (p,q)
	\ = \!
	\sum_{s < q < u} \m (-\infty,q)
	\ + \!
	\sum_{s < p < u} \m (p, \infty)
	\end{equation*}
	which is clearly finite because $M$ is q-tame.

	For the second summand, note that
	\begin{equation*}
	\sum_{ \substack{ (p,q) \in T \\ q-p > \epsilon } } \m (p,q)
	\ \, \leq \!\!
	\sum_{(p,q) \in T^{\epsilon}} \m (p,q),
	\end{equation*}
	where $T^{\epsilon}$ is the smaller triangle
	\begin{equation*}
	T^{\epsilon} = \{(p,q) \in T \mid q-p \geq \epsilon\}.
	\end{equation*}
	Thus, in order to prove the theorem, it suffices to show that we have
	\begin{equation*}
	\sum_{(p,q) \in T^{\epsilon}} \m (p,q)
	\ < \
	\infty.
	\end{equation*}
	To do this, we consider open quadrants
	\begin{equation*}
	Q(x, y) = \{ (p, q) \in \R^2 \mid p < x \text{ and } y < q \}.
	\end{equation*}
	Covering the compact set $T^{\epsilon}$ by the open quadrants $Q \left(x, x + \frac{\epsilon}{2} \right)$ for $x \in \R$, we may choose a finite subcover given by, say, $x_1,\dots, x_n$.
	We obtain
	\begin{equation*}
	\sum_{(p,q) \in T^{\epsilon}} \m (p,q)
	\ \leq \
	\sum_{i=1}^n \sum_{\substack{(p, q) \in \\ Q (x_i, x_i + \frac{\epsilon}{2})}} \m(p,q).
	\end{equation*}
	Each of the sums $\sum_{(p,q) \in Q \left(x_i, x_i + \frac{\epsilon}{2} \right)} \m(p,q)$ over the quadrants $Q \left(x_i, x_i + \frac{\epsilon}{2} \right)$ is finite since $M$ is q-tame (which is where the name q-tame or \emph{quadrant}-tame comes from, see \cite[Section 3.8]{Chazal.2016a}).
\end{proof}

Comparing to the classical Morse inequalities, the cap numbers in dimension $d$ play the role of the number of critical points with index $d$.
As an analogue to the Betti numbers of the manifold appearing in the usual Morse inequalities, Morse defines quantities $p_{d}$, which we refer to as \emph{essential dimensions}.
For a graded q-tame persistence module $M$, these can be expressed in the language of persistence diagrams as
\[
p_{d} \ = \!\! \sum_{p \in \R \cup \{-\infty\}} \m_d(p,\infty),
\]
which agrees with the dimension of the colimit of the degree $d$ part of $M$.

\begin{thm} \label{t:inequalities}
	Let $\epsilon > 0$, and let $M$ be a graded q-tame persistence module
	with finite cap numbers $c_{d}^{\epsilon}$ and finite essential dimensions $p_{d}$ for all $d$.
	If $\m_d(-\infty, p) = 0$ for all $p \in \R \cup \{\infty\}$ and all $d$, then we have Morse inequalities
	\begin{equation} \label{e:morse inequalities}
	\sum_{d=0}^n \ (-1)^{n-d} (c_{d}^{\epsilon} - p_{d}) \ \geq\ 0
	\end{equation}
	for any dimension $n$.
\end{thm}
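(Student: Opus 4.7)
The plan is to reduce the inequality to a telescoping identity. Under the hypothesis $\m_d(-\infty,p)=0$, every multiplicity appearing in $p_d$ or $m_d^\epsilon$ has a finite left endpoint, so we can concentrate on the distinction between finite and infinite right endpoints.

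First I would rewrite the definitions to isolate the bars with finite right endpoint. Setting
\[
A_d \ = \!\!\! \sum_{\substack{(p,q)\in\mulDom \\ q-p>\epsilon,\ q\neq\infty}} \m_d(p,q),
\]
I would observe that, using $\m_d(-\infty,p)=0$, the second summand in \eqref{e:cap numbers} can be split as
\[
\sum_{\substack{q-p>\epsilon \\ p\neq -\infty}} \m_d(p,q)
\ = \ A_d \ + \!\! \sum_{p\in\R}\m_d(p,\infty)
\ = \ A_d + p_d,
\]
since any pair with $q=\infty$ automatically satisfies $q-p>\epsilon$. The first summand in \eqref{e:cap numbers} is exactly $A_{d-1}$. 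Combining these gives the clean identity $m_d^\epsilon - p_d = A_{d-1} + A_d$.

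Next I would plug this into the alternating sum and let the terms cancel. Writing
\[
\sum_{d=0}^n (-1)^{n-d}(m_d^\epsilon - p_d)
\ = \
\sum_{d=0}^n (-1)^{n-d} A_{d-1} \ + \ \sum_{d=0}^n (-1)^{n-d} A_d,
\]
reindexing the first sum by $e=d-1$ and using the convention $A_{-1}=0$ (there is no persistence diagram in degree $-1$), every intermediate $A_0,\dots,A_{n-1}$ cancels, leaving only $A_n$. Since $A_n$ is a sum of non-negative integers, the inequality \eqref{e:morse inequalities} follows.

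I do not expect any serious obstacle: the finiteness of all the sums involved is guaranteed by the hypotheses (finiteness of $m_d^\epsilon$ and $p_d$), and the assumption $\m_d(-\infty,p)=0$ is precisely what is needed to prevent pathological contributions on the left boundary of $\mulDom$ from breaking the bookkeeping. The only subtlety worth flagging is the telescoping index shift at $d=0$, which is handled by the vanishing of $A_{-1}$.
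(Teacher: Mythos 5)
Your proof is correct and follows essentially the same approach as the paper: you identify $m_d^\epsilon - p_d$ as a sum $A_{d-1}+A_d$ of non-negative integers (the paper writes $\nu_d$ for your $A_d$, which coincides with it under the hypothesis $\m_d(-\infty,p)=0$) and conclude by telescoping. You are also careful to make the boundary convention $A_{-1}=0$ explicit, a point the paper's ``telescopic sum argument'' leaves implicit.
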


\begin{proof}
Recall that the $d$th $\epsilon$-cap number is defined as
\[c_{d}^{\epsilon} = \alpha_{d}^{\epsilon} + \omega_{d-1}^{\epsilon}.\] 
Since we assume $\m_d(-\infty, p) = 0$ for all $p$, we have
	\begin{equation*}
	p_{d} \ = \alpha_{d}^{\epsilon} - \omega_{d}^{\epsilon}.
	\end{equation*}
The difference of the two numbers is thus
\[
c_{d}^{\epsilon} - p_{d} = \omega_{d-1}^{\epsilon} + \omega_{d}^{\epsilon},
\]
and so their sum is
\[
\sum_{d=0}^n \ (-1)^{n-d} (c_{d}^{\epsilon} - p_{d}) = \omega_{n}^{\epsilon} \geq\ 0
\]
as claimed.
\end{proof}

In other words, given the assertion that the persistence module $M$ has a persistence diagram, the Morse inequalities simply express the fact that the total number of deaths of features with persistence greater than $\epsilon$ is nonnegative.
This observation illustrates the usefulness of interpreting fundamental facts in Morse theory through the lens of persistence theory.

As shown in \cref{t:cap numbers well defined}, the finiteness assumptions are satisfied if $M$ is initially and eventually constant.
Hence, as a special case, the theorem yields generalized Morse inequalities for any bounded real-valued function whose sublevel set filtration has q-tame persistent homology, including smooth Morse functions $f$ on closed smooth manifolds $X$.
As outlined in our motivation for the definition of cap numbers, in this setting our inequalities \eqref{e:morse inequalities} agree with the classical inequalities \eqref{e:classical morse inequalities}, as $\beta_d(X) = p_d$ and $c_d^{\epsilon} = \# \crit_d(f)$ for any $\epsilon > 0$ smaller than the minimum difference between any two critical values of~$f$.
Morse inequalities for unbounded functions can still be obtained by restricting the function to an arbitrary sublevel set.
Using \v{C}ech homology and considering bounded q-tame functions, our cap numbers and essential dimensions agree with the corresponding historical notions from \cite{Morse.1940}, so in this case our inequalities \eqref{e:morse inequalities} also agree with the inequalities \cite[Corollary~12.3]{Morse.1940}.

To apply our inequalities, one needs q-tameness, and our next goal will be to give topological conditions that ensure q-tameness (\cref{t:local connectedness implies q-tameness}).
These conditions are in particular satisfied by the Douglas functional (\cref{prop:douglas_hlc}), as shown by Morse and Tompkins in their work on unstable minimal surfaces \cite{Morse.1939}, which we will partly review in \cref{s:surfaces}, and which motivated the developments in \cite{Morse.1940}.

\begin{rem} \label{r:homotopically critial points}
	In addition to the formulation of the inequalities in terms of cap numbers, Morse also proposed a generalized version of critical points, which he called homotopically critical, and which formalizes the idea of criticality of a point in terms of topological changes in the sublevel set filtration.
	This notion was employed in the above mentioned work on minimal surfaces by Morse and Tompkins \cite{Morse.1939}.
	
	The usefulness of this notion might however be limited in some cases of interest.
	In Floer theory, for example, critical points of the action functional corresponding to a Hamiltonian usually do not have a finite index and thus do not lead to a change in the homotopy type of sublevel sets.
	In this setting, our approach of formulating the inequalities purely in algebraic terms might be more suitable: while these critical points are not topologically visible, they do correspond to features in the persistence diagram of the filtered Floer homology.
\end{rem}

\section{Existence of persistence diagrams} \label{s:connectivity}

We have seen that q-tame functions admit persistence diagrams, which can be used to formulate Morse inequalities.
With q-tameness being a rather abstract algebraic property, we now establish concrete topological conditions that ensure the q-tameness of a function.
Our definitions are motivated by similar conditions considered by Morse in his work on functional topology.
We will present a historical account in \cref{s:surfaces}.

Whether a function is q-tame or not depends on the functor that is used to pass from the sublevel set filtration of the function to a persistence module.
The general strategy is to deduce global finiteness properties like q-tameness from local ones.
Thus, the functors we consider should have a certain property allowing us to do so.

Specifically, let $\H = (\H_d)_{d \in \Z} \colon \Top \to \Vect$ be a fixed graded homotopy invariant functor, which we call a \emph{homology theory}.
A triple of spaces $X_{1}, X_{2} \subseteq X$ is said to have a \emph{Mayer--Vietoris sequence} for $\H$ if the inclusion-induced maps can be completed to a long exact sequence
\begin{equation*}
\begin{tikzcd}[column sep = small]
\cdots \arrow[r] &[-10pt] \H_{n+1}(X) \arrow[d] & &[-20pt] & \\
& \H_{n}(X_{1} \cap X_{2}) \arrow[r] &
\H_{n}(X_{1}) \oplus \H_{n}(X_{2}) \arrow[r] &
\H_{n}(X) \arrow[d] \\ & & &
\H_{n-1}(X_{1} \cap X_{2}) \arrow[r] &
\cdots \ .
\end{tikzcd}
\end{equation*}
We say that $\H$ has the \emph{open} (resp.\@\ \emph{compact}) \emph{Mayer--Vietoris property} if there are natural Mayer--Vietoris sequences for all triples $X_{1}, X_{2} \subseteq X$ with $X = X_1 \cup X_2$ and $X_i \subseteq X$ open (resp.\@\ compact Hausdorff).

For the rest of this section, we will assume that $\H$ is a homology theory that has either the open or the compact Mayer--Vietoris property and for which there is $n_0$ such that $\H_{n}$ is zero for all $n \leq n_0$.
We will also assume that $\H_{n}$ takes finite dimensional values on one-point spaces.
Note that this includes singular homology with field coefficients, which has the open Mayer--Vietoris property (like any homology theory in the sense of the Eilenberg--Steenrod axioms \cite[Section I]{Eilenberg.1952}), and it also includes \v{C}ech homology with field coefficients, which has the compact Mayer--Vietoris property.

\subsection{Local connectivity and q-tameness}

\begin{defi}
	For $n \in \Z$, a continuous map is said to be \emph{$n$-homologically small} or $\HS_n$ if the image of the map induced by $\H_{n}$ is finite dimensional.
	We omit references to $n$ if the condition holds for all integers.
\end{defi}

\begin{defi}
	The sublevel set filtration of a function $f \colon X \to \R$ is called \emph{locally homologically small} or \emph{$\HLC$} if for any $x \in X$, any neighborhood $V$ of $x$, and any pair of indices $s,t$ with $f(x) < s < t$ there is a neighborhood $U$ of $x$ with $U \subseteq V$ such that the inclusion $f_{\leq s} \cap U \hookrightarrow f_{\leq t} \cap V$ is $\HS$.
\end{defi}

\begin{defi}
	We say that a sublevel set filtration is \emph{compact} if all sublevel sets are compact Hausdorff spaces.
\end{defi}

If $f_{\leq t}$ is compact for all $t$, then the function $f$ is necessarily lower-semicontinuous and bounded from below (see \cite[p.~444]{Morse.1939} or \cite[Theorem 3.1]{Struwe.1988}).

Our main result, proven in the remainder of this subsection, is that for compact sublevel set filtrations the $\HLC$ condition implies \mbox{q-tameness}, and consequently also the existence of a persistence diagram:

\begin{thm} \label{t:local connectedness implies q-tameness}
	If the sublevel set filtration of a function $f \colon X \to \R$ is compact and	$\HLC$, then its persistent homology is also q-tame.
	In particular, $f$ has a persistence diagram.
\end{thm}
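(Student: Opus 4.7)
The plan is to bound $\operatorname{rank}(\H_n(f_{\leq s}) \to \H_n(f_{\leq t}))$ for every $s < t$ and every $n$, by induction on the degree $n$. The hypothesis that $\H_{n'} = 0$ for $n' \leq n_0$ gives the base case. For the inductive step, I would assume that $\H_{n-1}(f_{\leq a}) \to \H_{n-1}(f_{\leq b})$ has finite rank for all $a < b$.

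To carry out the inductive step, I would first pick an intermediate level $s < \sigma < t$ and apply the $\HLC$ condition at every $x \in f_{\leq \sigma}$ (with $V = X$ and indices chosen in $(\sigma, t)$) to obtain an open neighborhood $U_x$ of $x$ such that $f_{\leq \sigma} \cap U_x \hookrightarrow f_{\leq t}$ is $\HS$ in every degree. Compactness of $f_{\leq \sigma}$ then produces a finite subcover $U_1, \dotsc, U_k$; depending on whether $\H$ satisfies the open or the compact Mayer--Vietoris property, one works directly with the $U_i$ or shrinks to a finite compact cover $K_1, \dotsc, K_k$ with $K_i \subseteq U_i$, which is possible since $f_{\leq \sigma}$ is compact Hausdorff and hence normal. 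The crucial observation is that the $\HS$ property passes to intersections: for every nonempty $I$, the inclusion $K_I = \bigcap_{i \in I} K_i \hookrightarrow f_{\leq t}$ factors through $K_i \hookrightarrow f_{\leq t}$ for any $i \in I$, and is therefore $\HS$ in every degree.

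From here, I would assemble the local bounds into a global finite-rank statement via Mayer--Vietoris by induction on the cover size $k$, and in parallel construct a compatible cover $L_1, \dotsc, L_m$ of $f_{\leq t}$ from $\HLC$ at an auxiliary level in $(\sigma, t)$, arranged so that $K_i \subseteq L_i$ and $K_I \subseteq L_I$ for the corresponding index sets. Naturality of Mayer--Vietoris with respect to this compatible target cover, combined with the $\HS$ hypothesis on individual pieces and on all intersections and the inductive hypothesis in degree $n - 1$, should yield the desired finite rank for $\H_n(f_{\leq \sigma}) \to \H_n(f_{\leq t})$. The conclusion for $\H_n(f_{\leq s}) \to \H_n(f_{\leq t})$ then follows because that map factors through $\H_n(f_{\leq \sigma}) \to \H_n(f_{\leq t})$.

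The hard part will be the Mayer--Vietoris assembly itself: the image of the connecting homomorphism $\partial\colon \H_n(f_{\leq \sigma}) \to \H_{n-1}(K_1 \cap K_2)$ is a priori infinite-dimensional, so its contribution to $\H_n(f_{\leq t})$ is not immediately bounded by the $\HS$ hypotheses alone. Controlling it should require a careful rank inequality of ``five-lemma'' type relating the two naturally compatible Mayer--Vietoris long exact sequences through the target cover, leaning on both the $\HS$ conditions for pairwise (and higher) intersections and the inductive hypothesis that $\H_{n-1}$ maps between sublevel sets are already finite rank. A secondary subtlety is to uniformly accommodate both the open and the compact Mayer--Vietoris properties through the open-to-compact shrinking available in the normal space $f_{\leq \sigma}$.
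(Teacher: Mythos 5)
Your high-level plan --- induction on degree, using $\HLC$ for local finite rank, a finite subcover of a sublevel set, and a Mayer--Vietoris assembly --- has the right shape and matches the paper's strategy, and you correctly identify the bounding of the connecting homomorphism as the crux. But two concrete gaps remain. The more serious one is that your inductive hypothesis is too weak. You propose to assume only the \emph{global} statement that $\H_{n-1}(f_{\leq a}) \to \H_{n-1}(f_{\leq b})$ has finite rank for all $a<b$. What the Mayer--Vietoris assembly actually needs, in order to bound the image of the connecting homomorphism landing in $\H_{n-1}(K_1 \cap K_2 \cap f_{\leq s'})$, is finite rank of a map of the form $\H_{n-1}(K_1 \cap K_2 \cap f_{\leq s'}) \to \H_{n-1}(L_1 \cap L_2 \cap f_{\leq s''})$ between \emph{proper subsets} of the sublevel sets. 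That map neither factors through the global sublevel maps nor follows from the $\HS$ property you note for the composite inclusion $K_I \cap f_{\leq \sigma} \hookrightarrow f_{\leq t}$, which has a different target. The paper's key lemma is therefore stated locally and proved by induction on degree: for every compact $C$ inside every open $L$ and $s<t<u$, the inclusion $C \cap f_{\leq s} \to L \cap f_{\leq t}$ is $\HS$. The theorem is the special case $C = L =$ a sublevel set. You should strengthen your inductive statement accordingly; otherwise the degree-$(n-1)$ input you invoke simply isn't available.

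The second gap is that the rank estimate you gesture at as a ``five-lemma type'' argument is a specific commutative-diagram lemma (Bredon, Lemma~II.17.3, following Wilder): in a commuting fragment whose middle row is an exact Mayer--Vietoris sequence, if two bracketing vertical maps have finitely generated image then so does the diagonal composite. To feed this lemma the paper uses a nested sandwich of covers at \emph{three} filtration heights $s' < s'' < t$ (with corresponding open/compact layers), so that one bracketing map is $\HS_n$ by the $\HLC$ hypothesis and the other is $\HS_{n-1}$ by the local inductive hypothesis; a single nested pair $K_i \subseteq L_i$ does not give enough room. Moreover, the target space $L \cap f_{\leq t}$ is never decomposed by Mayer--Vietoris at all --- the top row of the diagram is the trivial one-piece sequence --- so the parallel compatible cover $L_1, \dots, L_m$ of $f_{\leq t}$ and the appeal to ``naturality with respect to the target cover'' are not the right ingredients and should be dropped.
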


The general proof strategy is inspired by the proof of Wilder's Finiteness Theorem \cite[p.~325]{Wilder.1949} as presented by Bredon \cite[Section II.17]{Bredon.1997}.
We collect the main ideas in several lemmas.

\begin{lem} \label{l:commutative algebra}
	Given a commutative diagram of modules over a principal ideal domain
	\begin{equation*}
	\begin{tikzcd}
	A_{1,1} \arrow[r] & A_{1,2} & \\
	A_{2,1} \arrow[r] \arrow[u] & A_{2,2} \arrow[r] \arrow[u] & A_{2,3} \\
	& A_{3,2} \arrow[r] \arrow[u] & A_{3,3} \arrow[u]
	\end{tikzcd}
	\end{equation*}
	where the middle row is exact and both $A_{2,1} \to A_{1,1}$ and $A_{3,3} \to A_{2,3}$ have finitely generated images, then so does $A_{3,2} \to A_{1,2}$.
\end{lem}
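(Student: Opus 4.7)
The plan is a diagram chase that reduces finite generation of the composite $A_{3,2} \to A_{2,2} \to A_{1,2}$ to the two given finiteness hypotheses, using only the Noetherian property of finitely generated modules over a PID. Name the horizontal maps $\beta \colon A_{1,1} \to A_{1,2}$ and $\eta \colon A_{3,2} \to A_{3,3}$, the middle row $A_{2,1} \xrightarrow{\delta} A_{2,2} \xrightarrow{\epsilon} A_{2,3}$, and the four vertical maps $\alpha \colon A_{2,1} \to A_{1,1}$, $\gamma \colon A_{2,2} \to A_{1,2}$, $\zeta \colon A_{3,2} \to A_{2,2}$, $\theta \colon A_{3,3} \to A_{2,3}$. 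The goal is to show that $\gamma \circ \zeta$ has finitely generated image.

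First I would push $\zeta(A_{3,2})$ down into $A_{2,3}$. Commutativity of the lower-right square gives $\epsilon \circ \zeta = \theta \circ \eta$, so $\epsilon(\zeta(A_{3,2})) \subseteq \im \theta$. Since $\im \theta$ is finitely generated by hypothesis and we are over a PID, the submodule $\epsilon(\zeta(A_{3,2}))$ is itself finitely generated, so I can pick finitely many $z_1, \ldots, z_k \in \zeta(A_{3,2})$ whose $\epsilon$-images generate it.

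Next, for an arbitrary $w \in \zeta(A_{3,2})$ I write $\epsilon(w) = \sum_i c_i \epsilon(z_i)$, so that $w - \sum_i c_i z_i \in \ker \epsilon = \im \delta$ by exactness of the middle row; choose $u \in A_{2,1}$ with $\delta(u) = w - \sum_i c_i z_i$. Applying $\gamma$ and using commutativity of the upper-left square to rewrite $\gamma \circ \delta$ as $\beta \circ \alpha$ yields $\gamma(w) = \sum_i c_i \gamma(z_i) + \beta(\alpha(u))$. Since $\im \alpha$ is finitely generated by hypothesis, so is its image $\beta(\im \alpha)$ under $\beta$. Consequently $\gamma(\zeta(A_{3,2}))$ is contained in, and therefore equals, the submodule of $A_{1,2}$ generated by $\gamma(z_1), \ldots, \gamma(z_k)$ together with finitely many generators of $\beta(\im \alpha)$, proving the claim.

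No step is genuinely hard; the only ingredient beyond formal diagram chasing is the Noetherian property of finitely generated modules over a PID, which is what forces the intermediate module $\epsilon(\zeta(A_{3,2}))$ to be finitely generated even though a priori it is only known to sit inside $\im \theta$. Everything else follows from the given commutativities and the exactness of the middle row.
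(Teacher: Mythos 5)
Your diagram chase is correct and is essentially the argument the paper delegates to Bredon (Lemma II.17.3): push forward to $A_{2,3}$ to capture the "part visible to $\theta$," correct by exactness into $\im\delta$, and pull the correction up through $\alpha$. One phrasing slip: at the end you write that $\gamma(\zeta(A_{3,2}))$ is \enquote{contained in, and therefore equals,} the submodule generated by the $\gamma(z_i)$ and generators of $\beta(\im\alpha)$ --- the reverse inclusion need not hold, since the elements $\beta(\alpha(u))$ arising in your chase need not exhaust $\beta(\im\alpha)$ nor lie in $\gamma(\zeta(A_{3,2}))$. But equality is unnecessary: containment in a finitely generated module over a PID (a Noetherian ring) already forces $\gamma(\zeta(A_{3,2}))$ to be finitely generated, which is exactly the Noetherian ingredient you already invoked earlier in the argument.
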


\begin{proof}
	This is proven via a straightforward diagram chase.
	For more details see \cite[Lemma II.17.3]{Bredon.1997}.
\end{proof}

\begin{lem} \label{l:neighborhood third}
	Let $X$ be a locally compact Hausdorff space.
	For any compact subset~$K$ and open set $U$ with $K \subseteq U$ there exists a compact set $K^\prime$ such that
	\begin{equation*}
	K \subseteq \interior(K^\prime) \subseteq K^\prime \subseteq U.
	\end{equation*}
\end{lem}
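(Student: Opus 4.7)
The plan is to cover $K$ by compact neighborhoods, each contained in $U$, and take $K'$ to be the union of finitely many of them chosen by the compactness of $K$. The setting of the paper (compact sublevel set filtrations) means that locally compact really means locally compact Hausdorff, which is needed for the construction.

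Concretely, I would first argue that for every $x \in K$ there is a compact set $N_x \subseteq U$ with $x \in \interior(N_x)$. By local compactness, pick any compact neighborhood $C$ of $x$. Then $C \cap U$ is an open neighborhood of $x$ inside the compact Hausdorff subspace $C$, and compact Hausdorff spaces are regular, so we can shrink $x$ to a neighborhood whose closure in $C$ is compact and still contained in $C \cap U$; this closure serves as $N_x$. This is the one step that relies on Hausdorffness, and it is the only place where real topology is used.

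Next, the open sets $\interior(N_x)$ for $x \in K$ form an open cover of $K$. Since $K$ is compact, I would extract a finite subcover indexed by $x_1, \dots, x_n \in K$, and define
\begin{equation*}
K^\prime = N_{x_1} \cup \cdots \cup N_{x_n}.
\end{equation*}
Then $K^\prime$ is compact as a finite union of compact sets, and $K^\prime \subseteq U$ because every $N_{x_i} \subseteq U$. Finally, the inclusion
\begin{equation*}
K \,\subseteq\, \interior(N_{x_1}) \cup \cdots \cup \interior(N_{x_n}) \,\subseteq\, \interior(K^\prime)
\end{equation*}
holds since the interior of a union contains the union of the interiors, yielding the desired chain $K \subseteq \interior(K^\prime) \subseteq K^\prime \subseteq U$.

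The only genuinely delicate step is the construction of $N_x$ with $N_x \subseteq U$ (as opposed to merely producing some compact neighborhood of $x$, which is immediate from local compactness); everything else is standard bookkeeping with finite unions and compactness of $K$.
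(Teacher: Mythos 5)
Your proposal is correct and takes essentially the same approach as the paper: cover $K$ by compact neighborhoods $C(x)\subseteq U$, extract a finite subcover by compactness of $K$, and take $K'$ to be the finite union. The only difference is that you spell out the shrinking argument (via regularity of the compact Hausdorff subspace) needed to obtain a compact neighborhood of $x$ inside $U$, a step the paper's proof takes for granted.
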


\begin{proof}
	For any $x \in K$ choose a compact neighborhood $C(x) \subseteq U$.
	We have
	\begin{equation*}
	K \subseteq \bigcup_{x \in K} \interior(C(x)).
	\end{equation*}
	Since $K$ is compact, there is a finite subset $\{x_1, \dots, x_m\}$ of elements in $K$ so that
	\begin{equation*}
	K \subseteq \bigcup_{i=1}^m \interior(C(x_i)) \subseteq \interior\left(\bigcup_{i=1}^m C(x_i)\right) \subseteq \bigcup_{i=1}^m C(x_i) \subseteq U
	\end{equation*}
	Defining $K^\prime = \bigcup_{i=1}^m C(x_i)$ finishes the proof.
\end{proof}

We want to use \cref{l:neighborhood third} on the domain of the function whose sublevel set filtration we consider.
However, we do not want to assume the domain to be locally compact for \cref{t:local connectedness implies q-tameness}.
To circumvent this, we will work in one of the sublevel sets, which are assumed to be compact Hausdorff and hence locally compact.
This requires the use of a slight weakening of the $\HLC$ condition.

\begin{defi}
	For $u \in \R$, the sublevel set filtration of a function $f \colon X \to \R$ is called \emph{$\HLC$ below $u$} if, for any $x \in X$, any neighborhood $V$ of $x$, and any pair of indices $s,t$ with $f(x) < s < t < u$, there is a neighborhood $U$ of $x$ with $U \subseteq V$ such that the inclusion $f_{\leq s} \cap U \hookrightarrow f_{\leq t} \cap V$ is $\HS$.
\end{defi}

\begin{lem} \label{l:restriction of LHS filtration}
	Let $f \colon X \to \R$ be a function whose sublevel set filtration is $\HLC$.
	Fix $u \in \R$ and let $g \colon Y \to \R$ be the restriction of $f$ to the sublevel set $Y = f_{\leq u}$.
	Then the sublevel set filtration defined by $g$ is $\HLC$ below $u$.
\end{lem}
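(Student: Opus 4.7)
The plan is to unwind the definitions carefully, using that for indices below $u$ the sublevel sets of $g$ and $f$ literally coincide as subspaces.

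First, I would fix data for the HLC-below-$u$ condition for $g$: a point $x \in Y$, a neighborhood $V$ of $x$ in $Y$, and indices $g(x) < s < t < u$. Since $V$ is a neighborhood of $x$ in the subspace topology, I would choose an open set $V' \subseteq X$ with $V' \cap Y = V$ (or at least $x \in V' \cap Y \subseteq V$); then $V'$ is a neighborhood of $x$ in $X$. I would then invoke the $\HLC$ property of the sublevel set filtration of $f$ at $x$ with neighborhood $V'$ and indices $f(x) = g(x) < s < t$ to obtain a neighborhood $U'$ of $x$ in $X$ with $U' \subseteq V'$ such that the inclusion $f_{\leq s} \cap U' \to f_{\leq t} \cap V'$ is $\HS$. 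Finally, I would set $U = U' \cap Y$, a neighborhood of $x$ in $Y$ with $U \subseteq V$.

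The second step is to check that this $U$ witnesses the $\HLC$ condition for $g$. Because $s < u$, we have $f_{\leq s} \subseteq f_{\leq u} = Y$, so $g_{\leq s} = f_{\leq s}$; likewise $g_{\leq t} = f_{\leq t}$. Consequently,
\begin{equation*}
g_{\leq s} \cap U = f_{\leq s} \cap (U' \cap Y) = f_{\leq s} \cap U',
\end{equation*}
and, for the same reason,
\begin{equation*}
g_{\leq t} \cap V = f_{\leq t} \cap V'.
\end{equation*}
Thus the inclusion $g_{\leq s} \cap U \hookrightarrow g_{\leq t} \cap V$ is literally the same map of spaces as $f_{\leq s} \cap U' \hookrightarrow f_{\leq t} \cap V'$, and so it is $\HS$ by construction.

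There is essentially no obstacle here; the proof is a bookkeeping exercise about subspace topology. The only subtlety worth flagging is the use of $t < u$ (strictly) to ensure $f_{\leq t}$ sits inside $Y$, which is precisely why the conclusion is only $\HLC$ \emph{below $u$} rather than $\HLC$ on all of $Y$: the filtration of $g$ terminates at $t = u$ with $g_{\leq u} = Y$, and there is no room to apply the HLC property of $f$ to a pair $s < t = u$ while keeping strict inequalities.
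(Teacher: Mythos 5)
Your proof is correct and follows essentially the same approach as the paper: extend the neighborhood $V$ of $x$ in $Y$ to a neighborhood $V'$ in $X$, apply the $\HLC$ property of $f$ to get $U'$, set $U = U' \cap Y$, and use the observation that $g_{\leq s} \cap U = f_{\leq s} \cap U'$ and $g_{\leq t} \cap V = f_{\leq t} \cap V'$ since $s, t < u$ forces the sublevel sets to lie entirely inside $Y$.
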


\begin{proof}
	Let $x \in Y$, let $V$ be a neighborhood of $x$ in $Y$, and consider indices $s,t$ with $g(x) < s < t < u$.
	We need to find a neighborhood $U \subseteq V$ of $x$ such that the inclusion $g_{\leq s} \cap U \hookrightarrow g_{\leq t} \cap V$ is $\HS$.

	Since $Y \subseteq X$ carries the subspace topology, we may choose a neighborhood $V'$ of $x$ in $X$ such that $V = V' \cap Y$.
	The sublevel set filtration of $f$ is assumed to be $\HLC$, so there is a neighborhood $U' \subseteq V'$ of $x$ in $X$ such that the inclusion $f_{\leq s} \cap U' \hookrightarrow f_{\leq t} \cap V'$ is $\HS$.
	We set $U = U' \cap Y$, which defines a neighborhood of $x$ in $Y$.

	Now $s < t < u$ implies that $g_{\leq s} = f_{\leq s}$ and $g_{\leq t} = f_{\leq t}$.
	Moreover, we have $f_{\leq s} \cap Y = f_{\leq s} \cap f_{\leq u} = f_{\leq s}$ and $f_{\leq t} \cap Y = f_{\leq t} \cap f_{\leq u} = f_{\leq t}$.
	Thus, we obtain $g_{\leq s} \cap U = f_{\leq s} \cap Y \cap U' = f_{\leq s} \cap U'$ and $g_{\leq t} \cap V = f_{\leq t} \cap Y \cap V' = f_{\leq t} \cap V'$.
	This implies that the inclusion $g_{\leq s} \cap U \hookrightarrow g_{\leq t} \cap V$ is $\HS$ because it agrees with the inclusion $f_{\leq s} \cap U' \hookrightarrow f_{\leq t} \cap V'$, which is $\HS$ by assumption.
	This finishes the proof.
\end{proof}

\begin{lem} \label{l:key lemma for q-tameness}
	Let $f \colon Y \to \R$ be a function on a locally compact Hausdorff space~$Y$ whose sublevel set filtration is compact and $\HLC$ below $u \in \R$, and consider subsets $C \subseteq L \subseteq Y$ with $C$ compact and $L$ open.
	For any $s < t < u$ the inclusion $C \cap f_{\leq s} \hookrightarrow L \cap f_{\leq t}$ is $\HS$.
\end{lem}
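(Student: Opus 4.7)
The strategy, modelled on Bredon's proof of Wilder's theorem \cite[Section~II.17]{Bredon.1997}, is to combine the local $\HLC$ hypothesis with a Mayer--Vietoris induction governed by \cref{l:commutative algebra}. The plan is to induct on the homological dimension $n$, with base case $n \leq n_0$ where $\H_n = 0$ by hypothesis, and to run inside each inductive step a secondary induction on the size of a finite open cover of $A := C \cap f_{\leq s}$.

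First, $A$ is compact: $X$ is locally compact Hausdorff, $C$ is closed, and $f_{\leq s}$ is compact and hence closed in $X$, so $A$ is a closed subset of the compact space $f_{\leq s}$. Fix a subdivision $s = s_0 < s_1 < \cdots < s_k = t$, with $k$ to be chosen large enough to support the combined depth of the nested inductions. For each $x \in A$ I construct, by iterated use of the $\HLC$ hypothesis below $u$, a chain of nested open neighborhoods $W_x^{(0)} \subseteq \cdots \subseteq W_x^{(k)} \subseteq L$ of $x$ so that each inclusion $f_{\leq s_{j-1}} \cap W_x^{(j-1)} \hookrightarrow f_{\leq s_j} \cap W_x^{(j)}$ is $\HS$; if $f(x) = s$ one starts the chain from a slightly larger level $s_0' \in (s, s_1)$ and uses the tautological inclusion $f_{\leq s} \cap W \subseteq f_{\leq s_0'} \cap W$. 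When $\H$ satisfies only the compact Mayer--Vietoris property, I additionally interleave compact neighborhoods between consecutive open layers by applying \cref{l:neighborhood third} inside the locally compact Hausdorff space $f_{\leq u}$, so that the compact Mayer--Vietoris sequence is available at each step. Compactness of $A$ now delivers a finite subcover $\{W_{x_i}^{(0)}\}_{i=1}^M$.

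The inductive claim, parametrized by $n$ and by the size $m \leq M$ of a subfamily of $\{W_{x_1}, \ldots, W_{x_M}\}$, is that for every such subfamily the inclusion of the $m$-fold union at a small layer into the $m$-fold union at a sufficiently larger layer induces a map on $\H_n$ with finite-dimensional image; analogous statements for $m$-fold intersections are carried along in parallel in order to feed the degree $n-1$ terms of the Mayer--Vietoris sequences. The case $m = 1$ is immediate by composing the layer-wise $\HS$ maps. For the step $m \to m+1$ I write an $(m+1)$-fold union as $U \cup W$ with $W = W_{x_{i_{m+1}}}$, compare the Mayer--Vietoris sequences at two successive layers via naturality, and invoke \cref{l:commutative algebra}: the arrow $A_{2,1} \to A_{1,1}$ is controlled by the $m$-case applied to $U$ together with the $\HS$ property on $W$, while $A_{3,3} \to A_{2,3}$ is controlled by the $(n-1)$-inductive hypothesis applied to the $m$-fold intersection $U \cap W$. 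Taking the full cover $m = M$ finally produces the lemma, since $\H_n(A) \to \H_n(L \cap f_{\leq t})$ factors through the inclusion of the outermost union.

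The main obstacle lies in the bookkeeping of the nested layered neighborhoods and the verification that intersections of the $W_{x_i}^{(j)}$ continue to enjoy $\HS$ inclusions between successive layers, which is what forces the iterated (rather than one-step) construction in the second paragraph and drives the choice of $k$. A subsidiary technical point is the uniform handling of the open versus compact Mayer--Vietoris hypothesis: the latter requires Mayer--Vietoris to be applied to compact rather than open covers, which is why \cref{l:neighborhood third} and the local compactness of $X$ are used to carry both open and compact neighborhoods in parallel through the construction whenever $\H$ is a \v{C}ech-type theory.
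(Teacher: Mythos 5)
Your overall strategy---Wilder--Bredon style Mayer--Vietoris induction on homological degree, with a finite cover supplied by compactness---is indeed the one the paper follows.  However, two concrete points in your implementation would not go through as written.

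First, you state that compact neighborhoods interleaved between the open layers are needed only ``when $\H$ satisfies only the compact Mayer--Vietoris property.''  This is not so.  In the inductive step $m \to m+1$ the degree-$(n-1)$ term in the Mayer--Vietoris comparison (the arrow $A_{3,3} \to A_{2,3}$ in \cref{l:commutative algebra}) must have finite-dimensional image, and the only way to obtain this is to invoke the lemma itself at degree $n-1$.  But the lemma requires a \emph{compact} set $C$ nested inside an open set $L$.  The set $U\cap W$ you name is open; without a compact set sandwiched between two open intersections you have no pair $(C,L)$ to feed into the $(n-1)$-case, regardless of which Mayer--Vietoris property $\H$ enjoys.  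The paper therefore constructs, via \cref{l:neighborhood third}, a four-term chain $V_i \subseteq K_i \subseteq V'_i \subseteq K'_i \subseteq U_i$ in the open case (and a longer one in the compact case), precisely so that $K'_1 \cap K'_2 \subseteq U_1\cap U_2$ is a compact-inside-open pair to which the $(n-1)$-hypothesis applies.  Your description of ``analogous statements for $m$-fold intersections carried along in parallel'' obscures this: the intersections are not handled by an auxiliary carried-along hypothesis (which $\HLC$ would not supply, since it only speaks of shrinking neighborhoods of a single point), but by a genuine recursive call to the whole lemma one degree down, and that call needs a compact set.

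Second, fixing a global subdivision $s=s_0<\dots<s_k=t$ in advance, ``with $k$ large enough to support the combined depth of the nested inductions,'' creates a circularity that your own last paragraph flags but does not resolve.  The depth you must support is governed by the size $M$ of the finite subcover; but $M$ is determined only after you have built the layered neighborhoods, whose construction already consumed the $k$ levels.  The paper sidesteps this entirely by working with the set $\Sigma_{s,t}$ of open sets $V$ with compact closure in $L$ for which \emph{some} $s'\in(s,t)$ makes $U\cap f_{\leq s'}\to L\cap f_{\leq t}$ $\HS_n$.  Closure of $\Sigma_{s,t}$ under binary union is proved by choosing a fresh $s'\in(s,\min_i s'_i)$ at each union, so intermediate levels are chosen adaptively rather than fixed a priori, and density of $\R$ guarantees one never runs out of room.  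This existential bookkeeping device both dissolves the circularity and eliminates the need for a secondary induction on cover size: one simply observes that $C$ is covered by finitely many members of $\Sigma_{s,t}$, their union is again in $\Sigma_{s,t}$, and membership in $\Sigma_{s,t}$ directly yields the $\HS_n$ conclusion.

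Adopting the paper's $\Sigma_{s,t}$ formalism, and introducing compact interleaving layers in the open Mayer--Vietoris case as well, would repair both gaps.
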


\begin{proof}
	Recall our assumption that the underlying homology theory $\H$ has either the open or the compact Mayer--Vietoris property and that there is some $n_0$ such that $\H_{n}$ is zero for all $n \leq n_0$.
	The statement of the lemma holds for $\HS_{n}$ in place of $\HS$ for any $n \leq n_0$ since $\H_{n}$ induces the zero map.
	We will proceed by induction on $n \geq n_0$ assuming the statement for $\HS_{n-1}$.

	We define $\Sigma_{s, t}$ to be the collection of all open subsets $V \subseteq Y$ whose closure $\overline{V}$ is compact, contained in $L$, and has an open neighborhood $U$ with $\overline{V} \subseteq U \subseteq L$	for which there exists $s' \in (s,\, t)$ such that the inclusion $U \cap f_{\leq s'} \hookrightarrow L \cap f_{\leq t}$ is $\HS_n$.
	We will show that $\Sigma_{s, t}$ has the following three properties:
	\begin{enumerate}
		\item Any point $x \in L \cap f_{\leq s}$ has a neighborhood $V_x \in \Sigma_{s,t}$.
		\item If $V_1,\, V_2 \in \Sigma_{s,t}$ then $V_1 \cup V_2 \in \Sigma_{s,t}$.
		\item For each $V \in \Sigma_{s,t}$ the inclusion
		$V \cap f_{\leq s} \hookrightarrow L \cap f_{\leq t}$ is $\HS_n$.
	\end{enumerate}

	Assuming them for the moment, the first property allows us to cover $C \cap f_{\leq s}$ by sets $V_x \in \Sigma_{s,t}$, $x \in C \cap f_{\leq s}$.
	Because both $C$ and $f_{\leq s}$ are compact, $C \cap f_{\leq s}$ is again compact, and hence the cover can be chosen finite, represented by say $x_1,\dots, x_m$.
	By the second property, we have
	\[V \defeq \bigcup_{i = 1}^m V_{x_i} \in \Sigma_{s,t}.\]
	Using the third property, the inclusion
	$V \cap f_{\leq s} \hookrightarrow L \cap f_{\leq t}$
	is $\HS_n$.
	The inclusion
	$C \cap f_{\leq s} \hookrightarrow L \cap f_{\leq t}$
	factors through the previous one, so it is $\HS_n$ as well.
	What is left to do is to show that $\Sigma_{s,t}$ has the three claimed properties.

	For the third property, let $V \in \Sigma_{s,t}$ with $U$ an open neighborhood of $\overline{V}$ in $L$ and $s' \in (s,\, t)$ such that the inclusion
	$U \cap f_{\leq s'} \hookrightarrow L \cap f_{\leq t}$
	is $\HS_n$.
	Again, the inclusion
	$V \cap f_{\leq s} \hookrightarrow L \cap f_{\leq t}$
	factors through the previous one, so it is $\HS_n$ as well.
	Thus, $\Sigma_{s, t}$ has the third property we want.

	Next, we will show using the $\HLC$ property that $\Sigma_{s, t}$ has the first required property, i.e., that any point $x \in L \cap f_{\leq s}$ has a neighborhood in $\Sigma_{s, t}$.
	Choose an arbitrary $s' \in (s,\, t)$.
	Since the sublevel set filtration of $f$ is $\HLC$ below $u$ and we have $f(x) \leq s < s' < t < u$, there is an open neighborhood $U_x \subseteq L$ such that the inclusion
	$U_x \cap f_{\leq s'} \hookrightarrow L \cap f_{\leq t}$
	is $\HS$, so in particular $\HS_n$.
	By local compactness of $Y$ we can choose a compact neighborhood $K_x$ of $x$ contained in $U_x$.
	Now $V_x = \interior (K_x)$ is a neighborhood of $x$ with $V_x \in \Sigma_{s,t}$.

	Finally, using the Mayer--Vietoris property and the induction hypothesis we will show that $\Sigma_{s,t}$ has the second required property, i.e., that it is closed under finite unions.
	So for $i \in \{1, 2\}$ let $V_i \in \Sigma_{s,t}$ with $U_i$ and $s'_i \in (s,\, t)$ such that
	$\overline{V_i} \subseteq U_i \subseteq L$
	and
	$U_{i} \cap f_{\leq s'_i} \hookrightarrow L \cap f_{\leq t}$
	is $\HS_n$.
	Writing $K_i = \overline{V_i}$, we use \cref{l:neighborhood third} to construct compact sets $K'_i$ such that
	\begin{equation*}
	V_i \subseteq K_i \subseteq V'_i \subseteq K'_i \subseteq U_i \subseteq L
	\end{equation*}
	where $V'_i = \interior(K'_i)$.
	The union $V_1 \cup V_2 \subseteq L$ is open, its closure $\overline{V_1 \cup V_2}$ is compact, and $\overline{V_1 \cup V_2} \subseteq V'_1 \cup V'_2 \subseteq L$.
	Thus, we obtain $V_1 \cup V_2 \in \Sigma_{s,t}$ if we can show that there is an $s' \in (s,\, t)$ such that the inclusion
	$\left(V'_1 \cup V'_2 \right) \cap f_{\leq s'} \hookrightarrow L \cap f_{\leq t}$
	is $\HS_n$.
	To do so, we set $s'' = \min_i s'_i$ and choose $s' \in (s,\, s'')$.
	For proving that $\left(V'_1 \cup V'_2 \right) \cap f_{\leq s'} \hookrightarrow L \cap f_{\leq t}$ is $\HS_n$ we now distinguish the two cases where $\H$ has either the open or the compact Mayer--Vietoris property.

	For the open Mayer--Vietoris property, note that for both $i \in \{1, 2\}$ the inclusions
	$U_i \cap f_{\leq s''} \hookrightarrow L \cap f_{\leq t}$
	are $\HS_n$.
	Moreover, the inclusion
	$V'_1 \cap V'_2 \cap f_{\leq s'} \hookrightarrow U_1 \cap U_2 \cap f_{\leq s''}$
	is $\HS_{n-1}$ because it factors through the inclusion
	$K'_1 \cap K'_2 \cap f_{\leq s'} \hookrightarrow U_1 \cap U_2 \cap f_{\leq s''}$,
	which is $\HS_{n-1}$ by the induction hypothesis.
	Because the $V_i$ and $V'_i$ are open and because $\H$ has the open Mayer--Vietoris property, we obtain the following commutative diagram satisfying the assumptions of \cref{l:commutative algebra}:
	\[
	\begin{tikzcd}[column sep=small,every matrix/.append style={nodes={font=\small}}]
	\H_n(L \cap f_{\leq t}) \oplus \H_n(L \cap f_{\leq t}) \arrow[r] &
	\H_n(L \cap f_{\leq t}) & \\
	\H_{n}(U_1 \cap f_{\leq s''}) \oplus \H_n(U_2 \cap f_{\leq s''}) \arrow[r] \arrow[u] &
	\H_{n}((U_1 \cup U_2) \cap f_{\leq s''}) \arrow[r] \arrow[u] &
	\H_{n-1}(U_1 \cap U_2 \cap f_{\leq s''}) \\ &
	\H_{n}((V'_1 \cup V'_2) \cap f_{\leq s'}) \arrow[r] \arrow[u] &
	\H_{n-1}(V'_1 \cap V'_2 \cap f_{\leq s'}) \arrow[u].
	\end{tikzcd}
	\]
	We conclude that the inclusion
	$\left(V'_1 \cup V'_2 \right) \cap f_{\leq s'} \hookrightarrow L \cap f_{\leq t}$
	is $\HS_n$, which finishes this part of the proof.

	For the compact Mayer--Vietoris property, we apply \cref{l:neighborhood third} once more to obtain compact sets $K''_i$ such that
	\begin{equation*}
	V_i \subseteq K_i \subseteq V'_i \subseteq K'_i \subseteq V''_i \subseteq K''_i \subseteq U_i \subseteq L
	\end{equation*}
	where $V''_i = \interior(K''_i)$.
	The rest of the proof is then analogous to the previous case:
	We have that for both $i \in \{1, 2\}$ the inclusion
	$K''_i \cap f_{\leq s''} \hookrightarrow L \cap f_{\leq t}$
	is $\HS_n$ because it factors through $U_i \cap f_{\leq s''} \hookrightarrow L \cap f_{\leq t}$.
	Moreover, the inclusion
	$K'_1 \cap K'_2 \cap f_{\leq s'} \hookrightarrow K''_1 \cap K''_2 \cap f_{\leq s''}$
	is $\HS_{n-1}$ because it factors through the inclusion
	$K'_1 \cap K'_2 \cap f_{\leq s'} \hookrightarrow V''_1 \cap V''_2 \cap f_{\leq s''}$,
	which is $\HS_{n-1}$ by the induction hypothesis.
	Because the $K'_i$ and $K''_i$ as well as the sublevel sets of $f$ are all compact and because $\H$ has the compact Mayer--Vietoris property, we obtain the following commutative diagram satisfying the assumptions of \cref{l:commutative algebra}:
	\[
	\begin{tikzcd}[column sep=tiny,every matrix/.append style={nodes={font=\small}}]
	\H_n(L \cap f_{\leq t}) \oplus \H_n(L \cap f_{\leq t}) \arrow[r] &
	\H_n(L \cap f_{\leq t}) & \\
	\H_{n}(K''_1 \cap f_{\leq s''}) \oplus \H_n(K''_2 \cap f_{\leq s''}) \arrow[r] \arrow[u] &
	\H_{n}((K''_1 \cup K''_2) \cap f_{\leq s''}) \arrow[r] \arrow[u] &
	\H_{n-1}(K''_1 \cap K''_2 \cap f_{\leq s''}) \\ &
	\H_{n}((K'_1 \cup K'_2) \cap f_{\leq s'}) \arrow[r] \arrow[u] &
	\H_{n-1}(K'_1 \cap K'_2 \cap f_{\leq s'}) \arrow[u].
	\end{tikzcd}
	\]
	We conclude that the inclusion
	$\left(K'_1 \cup K'_2 \right) \cap f_{\leq s'} \hookrightarrow L \cap f_{\leq t}$
	is $\HS_n$, and so the same is true for the inclusion
	$\left(V'_1 \cup V'_2 \right) \cap f_{\leq s'} \hookrightarrow L \cap f_{\leq t}$
	as it factors through the previous one.
\end{proof}

We can now complete the proof of the claim stating that for compact sublevel set filtrations, $\HLC$ implies q-tameness.

\begin{proof}[Proof of \cref{t:local connectedness implies q-tameness}]
	By definition, the sublevel set filtration of $f$ is q-tame if and only if the inclusion $f_{\leq s} \hookrightarrow f_{\leq t}$ is $\HS$ for all pairs $s < t$.
	Choose $u \in \R$ with $u > t$ and let $g \colon Y \to \R$ be the restriction of $f$ to the sublevel set $Y=f_{\leq u}$.
	Since we assume $f$ to induce a $\HLC$ sublevel set filtration, by \cref{l:restriction of LHS filtration} the sublevel set filtration of $g$ is $\HLC$ below $u$.
	Clearly, the sublevel set filtration of $g$ is also compact, and its domain $Y$ is locally compact being a compact Hausdorff space by assumption.
	Thus, we can apply \cref{l:key lemma for q-tameness} to the filtration $g_{\leq \bullet}$ with $C = L = Y$ to obtain that the inclusion $f_{\leq s} = C \cap g_{\leq s} \hookrightarrow L \cap g_{\leq t} = f_{\leq t}$ is $\HS$.
\end{proof}

\subsection{Continuity}

We now describe a weaker version of local connectivity of sublevel set filtrations, which implies q-tameness when continuity is assumed.

\begin{defi}
	The sublevel set filtration of a function $f \colon X \to \R$ is said to be \emph{weakly locally homologically small} or \emph{weakly $\HLC$} if for any $x \in X$, any neighborhood $V$ of $x$, and any index $t > f(x)$, there is an index $s$ with $f(x) < s < t$ and a neighborhood $U$ of $x$ with $U \subseteq V$ such that the inclusion $f_{\leq s} \cap U \hookrightarrow f_{\leq t} \cap V$ is $\HS$.
\end{defi}

Clearly, any $\HLC$ sublevel set filtration is also weakly $\HLC$:
while the weak $\HLC$ property merely requires the existence of an index $s \in (f(x),t)$ satisfying the $\HS$ condition, the $\HLC$ property requires the $\HS$ condition to hold for any $s \in (f(x),t)$.
If the filtration is induced by a continuous function, the converse also holds, as the following theorem shows.

\begin{lem} \label{l:weak hlc to hlc}
	If the sublevel set filtration of a continuous function $f \colon X \to \R$ is weakly $\HLC$, then it is also $\HLC$.
\end{lem}

\begin{proof}
	Fix $x \in X$, a neighborhood $V$ of $x$, and indices $f(x) < s < t$.
	We need to show that there exists a neighborhood $U \subseteq V$ of $x$ such that the inclusion $f_{\leq s} \cap U \hookrightarrow f_{\leq t} \cap V$ is $\HS$.

	To do so, we start by using the weak $\HLC$ property to choose a neighborhood $U' \subseteq V$ of $x$ and an index $s' \in (f(x),\, t)$ such that the inclusion $f_{\leq s'} \cap U' \hookrightarrow f_{\leq t} \cap V$ is $\HS$.
	Now, we choose $U = f_{< s'} \cap U'$, where $f_{< s'} = f^{-1} (-\infty, s')$.
	Note that this choice of $U$ still defines a neighborhood of $x$ because $f$ is assumed to be continuous, so that $f_{< s'}$ is an open subset of~$X$.

	We obtain that $f_{\leq s} \cap U \subseteq f_{\leq s'} \cap U'$, so that the inclusion $f_{\leq s} \cap U \hookrightarrow f_{\leq t} \cap V$ is $\HS$  as it factors through the inclusion $f_{\leq s'} \cap U' \hookrightarrow f_{\leq t} \cap V$, which is $\HS$.
\end{proof}

The following result is deduced directly from \cref{l:weak hlc to hlc} and \cref{t:local connectedness implies q-tameness}.
The existence of a result of this kind has been suggested by Weinberger \cite{Weinberger.2011}, and a multiparameter version with slightly stronger assumptions on the domain of the function has been shown by Cagliari and Landi \cite{Cagliari.2011} .

\begin{cor} \label{c:q-tameness for continuous functions}
	If the sublevel set filtration of a continuous function $f \colon X \to \R$ is compact and weakly $\HLC$, then it is also q-tame.
\end{cor}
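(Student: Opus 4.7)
The plan is to obtain this as an immediate corollary of the two main results just established. Namely, \cref{t:weak hlc to hlc} upgrades weak $\HLC$ to $\HLC$ under the hypothesis of continuity of $f$, and \cref{t:local connectedness implies q-tameness} then deduces q-tameness from $\HLC$ together with compactness of the sublevel set filtration.

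Concretely, I would proceed in two lines. First, since $f$ is continuous and its sublevel set filtration is weakly $\HLC$ by assumption, \cref{t:weak hlc to hlc} applies and gives that the sublevel set filtration of $f$ is in fact $\HLC$. Second, the sublevel set filtration is also compact by assumption, so all hypotheses of \cref{t:local connectedness implies q-tameness} are met, yielding q-tameness of the filtration.

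There is essentially no obstacle here: both ingredients have already been proved, and the corollary amounts to a composition of implications. The only point worth flagging is that the continuity hypothesis on $f$ is used solely to invoke \cref{t:weak hlc to hlc} (it is not needed for \cref{t:local connectedness implies q-tameness} itself), which is precisely why one can relax $\HLC$ to the weak version in this continuous setting but not in the more general compact setting treated by \cref{t:local connectedness implies q-tameness}.
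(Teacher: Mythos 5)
Your proposal is correct and matches the paper exactly: the corollary is stated immediately after the sentence ``Combining \cref{t:weak hlc to hlc} with \cref{t:local connectedness implies q-tameness} also yields the following result for continuous functions,'' so the intended argument is precisely the two-step composition you describe.
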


As the following example illustrates, the continuity assumption in the above corollary is crucial.

\begin{figure}[t]
	\centering
	\begin{tikzpicture}[scale = 60]
		\draw[thick,lightgray] (-.1,-.001) rectangle (.1,.05);
		\clip (-.1,0) rectangle (.1,.05);
		\foreach \i in {1,...,100}{
			\draw[line width=0.4/\i^0.25 pt] (0, 1/\i^2) circle (1/\i^2);
		}
	\end{tikzpicture}
	\caption{A closeup of the Hawaiian earring $\mathbb{H}^1$.}
\end{figure}
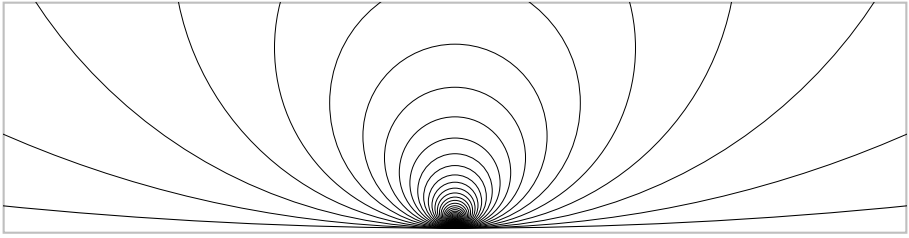

\begin{example} \label{e:counterexample}
	Consider the \emph{$d$-dimensional Hawaiian earring}
	\begin{equation*}
		\HE = \bigcup_{n \in \N} \left\{ (x_0, \dots, x_d) \in \R^{d+1} \ \middle | \ \left( x_0 - \frac{1}{n} \right)^2 \!\! + x_1^2 + \dots + x_d^2 = \left( \frac{1}{n} \right)^2 \right\},
	\end{equation*}
	which is a compact subspace of $\R^{d+1}$.
	The function $f \colon \HE \to \R$ whose value is $0$ at the origin and is $1$ everywhere else defines a compact and weakly $\HLC$ sublevel set filtration that is not q-tame with respect to any homology theory $\H$ for which $\H_{d}(\HE)$ is infinite dimensional, as is the case for both singular and \v Cech homology.
	Specifically, using the fact that \v{C}ech homology of compact Hausdorff spaces commutes with inverse limits, it is straightforward to verify that the \v{C}ech homology in degree $d$ of the $d$-dimensional Hawaiian earring $\HE$ is isomorphic to $\prod_{n\in\N}\F$, which is infinite dimensional over $\F$.
	Moreover, the singular homology of $\HE$ is also infinite dimensional, as proven in \cite{Barratt.1962}.

	To verify that $f$ has compact sublevel sets we notice that all sublevel sets are either the empty set, the singleton containing the origin, or $\HE$ itself, all compact Hausdorff spaces.

	In order to verify that the sublevel set filtration of $f$ is weakly $\HLC$, we consider $x \in \HE$, $V$ a neighborhood of $x$ in $\HE$, and $t > f(x)$.
	We need to find a neighborhood $U \subseteq V$ of $x$ and $s \in (f(x), t)$ such that the inclusion $f_{\leq s} \cap U \hookrightarrow f_{\leq t} \cap V$ is $\HS$.
	Since we assume that $\H_{n}$ is homotopy invariant and takes singletons to finite dimensional spaces, it suffices to find $U$ as above such that $f_{\leq s} \cap U \hookrightarrow f_{\leq t} \cap V$ is homotopic to a constant map.

	If $x$ is the origin, we have $f(x) = 0$ and choose $s \in (0, \min\{t, 1\})$.
	Then $f_{\leq s} = \{x\}$, so with $U = V$ the inclusion $f_{\leq s} \cap U \hookrightarrow f_{\leq t} \cap V$ is the inclusion of $\{x\}$ into $f_{\leq t} \cap V$, which is a constant map, so the weak $\HLC$ condition is trivially satisfied.

	For $x$ different from the origin we have $f(x) = 1$ and choose $s \in (1,t)$ arbitrarily, so that $f_{\leq s} = f_{\leq t} = \HE$.
	Note that since $x$ is not the origin, there is a unique $d$-sphere in $\HE$ that contains $x$.
	Clearly, we may choose $\delta > 0$ so small that $B_{\delta}(x) = \{y \in \R^{d+1} \mid \Vert x - y \Vert < \delta\} \cap \HE$ is a topological ball contained in this sphere and contained in $V$.
	The ball $B_\delta(x)$ can be contracted to $\{x\}$ in $V$, so choosing $U = B_{\delta}(x)$, we obtain that the inclusion $f_{\leq s} \cap U \hookrightarrow f_{\leq t} \cap V$ is homotopic to the constant map with value $x$.

	It remains to be shown that $f_{\leq \bullet}$ is not q-tame for $\H$.
	This follows directly from our assumption that $\H_{d}(\HE)$ is not finite dimensional, as $f_{\leq t}$ is constant with value $\HE$ for $t \geq 1$.
\end{example}
	
\section{Persistent homology and functional topology} \label{s:surfaces}

Having established topological conditions for the existence of persistence diagrams associated to filtrations and corresponding generalized Morse inequalities, we now describe how these results relate to Morse's general theory of functional topology \cite{Morse.1937, Morse.1938, Morse.1940}, and to its application in Morse and Tompkins' work on unstable minimal surfaces from \cite{Morse.1939}.
We will focus on the homological aspects of this approach, referring to \cite[Sections 4.3--5]{Bott.1980} for a general exposition.
For a more thorough presentation of the unstable minimal surface problem, including the analytical details see \cite[Section II.6]{Struwe.1988}, and for a historical account and an overview of subsequent results see \cite[Section 6]{Dierkes.2010} and \cite[Section 6.8.1]{Dierkes.2010b}.

\subsection{The unstable minimal surface problem}
\label{subsec:unstable}

Morse and Tompkins considered the following setting introduced by Douglas.
Let $g \colon \R \to \R^n$ be a $2\pi$-periodic function representing a simple closed curve such that $g$ is differentiable with Lipschitz derivative.
Let $\widetilde{\Omega}$ be the space of continuous non-decreasing functions $\varphi \colon \R \to \R$ with $\varphi(t+2\pi) = \varphi(t) + 2\pi$ for all $t$ and $\varphi(\alpha_i)=\alpha_i$ for three fixed distinct points $\alpha_i \in [0,2\pi)$.
The \emph{Douglas functional} on $\widetilde \Omega$ associated to the curve $g$ is defined as
\begin{equation*}
A_g(\varphi) = \frac{1}{16 \pi} \int_0^{2\pi} \int_0^{2\pi} \left\| \frac{g(\varphi(\alpha)) - g(\varphi(\beta))}{\sin \frac{\alpha-\beta}{2}} \right\|_2^2 \ \mathrm{d}\alpha \ \mathrm{d}\beta.
\end{equation*}
It coincides with the Dirichlet energy of the unique harmonic extension of the reparametrized curve $g \circ \varphi$ to a parametrized surface.
The Dirichlet energy is an upper bound for the area, with equality if the parametrization is conformal.
Let $\Omega_g = \{\varphi \in \widetilde\Omega \mid A_g(\varphi) < \infty\}$, equipped with the $C^0$ metric.
The set $\Omega_g$ is non-empty and the sublevel sets of $A_g$ are compact \cite[p.~448]{Morse.1939}.
Since $A_g$ is bounded below by $0$, this implies that $A_g$ attains a global minimum.
The corresponding surface is then a solution of \emph{Plateau's Problem}, which asks for a surface homeomorphic to a disk with boundary $g$ and minimum area.

As alluded to in \cref{r:homotopically critial points}, Morse and Tompkins consider a homotopical notion of critical point for a general function $F \colon M \to \R$ on a metric space \cite[p.~445]{Morse.1939}, see also \cite{Morse.1943}.

\begin{defi}[{{\cite[Definition II.6.1-II.6.2]{Struwe.1988}, \cite[p.~445, 472]{Morse.1939}}}]
\label{def:critical_set}
	Consider a real-valued function $F$ on a metric space $(M,d)$.
	A point $p \in M$ is called \emph{homotopically regular} if there exists a neighborhood $U$ of $p$ in $F_{\leq F(p)}$ and a continuous map $\varphi \colon U \times [0,1] \to M$ satisfying $\varphi(p,1) \neq p$ such that for every compact subset $V \subseteq U$ there exists a function $\delta \colon \R_{\geq 0} \to \R_{\geq 0}$ with
	$\delta(e) = 0$ if and only if $e = 0$, and
	\[
	F(\varphi(x,s)) - F(\varphi(x,t)) > \delta(d(\varphi(x,s),\varphi(x,t)))
	\]
	for all $x \in V$ and $0 \leq s \leq t \leq 1$.
	A point that is not homotopically regular is called \emph{homotopically critical}.
	Function values of homotopically critical points will be called \emph{critical values} and all other values will be called \emph{regular values}.
	A \emph{critical set} $S$ is a closed and open subset of the subspace of all homotopically critical points with a given function value.
	It is said to be of \emph{minimum type} if there exists a neighborhood $N$ of the closure $\overline{S}$ of $S$, taken in $M$, such that the function values on $N \setminus S$ strictly exceed the function value on $S$.
\end{defi}

Note that, in particular, an isolated local minimum constitutes a critical set of minimum type. Similarly, a critical submanifold of a Morse-Bott function on which the function values are locally minimized is also a critical set of minimum type.

Morse and Tompkins state the following Mountain Pass Theorem under slightly different assumptions.
It is worth noting, however, that the details in \cite{Morse.1939} are incomplete, with some crucial theorems such as \cite[Theorems 7.3 and 7.4, Corollary 7.1]{Morse.1939} being stated without proof, and with a citation to a paper in preparation that has never been published under the given name (we suppose that this paper is \cite{Morse.1940}).
Moreover, there is a gap in \cite{Morse.1940}, because \cite[Theorem 6.3]{Morse.1940}, which establishes q-tameness, is incorrect as we will show in \cref{c:counterexample}.
The assumptions we choose for our version of the Mountain Pass Theorem are adapted from the original assumptions to the modern language of persistence theory, and they fix the problem with q-tameness.
Still, our assumptions can be established for the Douglas functional from the results of Morse and Tompkins \cite{Morse.1939}.
We will comment in more detail on the differences between the assumptions in \cref{rem:critical_set,rem:mountain_pass_assumptions,subsec:historic_hlc}.

\begin{thm}[Mountain Pass Theorem, {\cite[Corollary 7.1]{Morse.1939}}]
\label{thm:mountain_pass}
	Let $M$ be a connected metric space and $F \colon M \to \R$ a function.
	Assume that $F$ is bounded below and that the sublevel set filtration of $F$ is compact and	$\HLC$ with respect to \v Cech homology.
	If $M$ contains two distinct critical sets of $F$ of minimum type, then it also contains a critical set not of minimum type.
\end{thm}

\begin{rem}\label{rem:critical_set}
	Our \cref{def:critical_set} of a critical set $S$ of \emph{minimum type} differs slightly from that of Morse and Tompkins \cite[p.~472]{Morse.1939}, who do not require the neighborhood $N$ of $S$ on which the function values exceed those on $S$ to contain the closure of $S$.
	Without this additional assumption, however, \cref{thm:mountain_pass} does not hold, as shown by the example $f \colon [0,1] \to \R$ with $f(0) = f(1) = 0$ and $f(t) = 1$ for $0 < t < 1$:
	$f$ has the four critical sets $\{0\}$, $\{1\}$, $\{0\} \cup \{1\}$ and $(0,1)$, which all satisfy the minimum type condition if the neighborhood $N$ need not contain their closure, but then there is no critical set that is not of minimum type.
\end{rem}

\begin{rem}\label{rem:mountain_pass_assumptions}
    In \cite[Corollary 7.1]{Morse.1939}, the assumptions that Morse and Tompkins use for $F$ and $M$ are that the sublevel set filtration is compact, ``regular at infinity'',``weakly upper-reducible'', and that $M$ is ``locally $F$-connected''.
    Compactness is also required for our version, regularity at infinity roughly corresponds to our assumption that $M$ is connected, and local $F$-connectedness is replaced by the $\HLC$ condition.
    (This last point will be discussed in more detail in \cref{subsec:historic_hlc}).
    
    The weak upper-reducibility condition does not have an analogue in the assumptions of \cref{thm:mountain_pass}. 
    This is interesting insofar as establishing that the Douglas functional satisfies this condition takes up a large part of \cite{Morse.1939}, and Courant \cite{Courant.1941} emphasizes that establishing weak upper-reducibility is one of the key difficulties in the proof of Morse and Tompkins, requiring ``a rather deep explicit analysis of the functional''.
    
    One can check that the example given in \cref{rem:critical_set} also satisfies the conditions that Morse and Tompkins impose for their Mountain Pass Theorem.

\end{rem}

\begin{rem}
	We mention that more general homology theories can be considered in \cref{thm:mountain_pass}.
	The precise hypotheses on the homology theory $H$ are that it is additive, taking non-zero values on non-empty sets in dimension $0$, and such that $\H(F_{\leq \bullet})$ is continuous from above, i.e., $\H(F_{\leq s}) \to \lim_{s < t} \H(F_{\leq t})$ is an isomorphism for all~$s \in \R$.
	One may also replace the topological conditions of $M$ being connected and $F$ being $\HLC$ by the algebraic conditions that $p_0 = 1$ and $\H(F_{\leq \bullet})$ be q-tame.
\end{rem}

Morse and Tompkins show \cite[Theorem 6.2]{Morse.1939} that each homotopically critical point of the Douglas functional $A_g$ indeed corresponds to
a \emph{minimal surface} -- a surface with vanishing mean curvature -- and using this correspondence the following result, also reviewed in \cite[Theorem II.6.10]{Struwe.1988}, can be deduced from the Mountain Pass Theorem.

\begin{thm}[{Unstable Minimal Surface Theorem \cite[p.~472]{Morse.1939}}]
\label{thm:unstable_minimial_surface}
	If the space~$\Omega_g$ contains two minimal surfaces contained in distinct critical sets of minimum type of the functional $A_g$, then it also contains an \emph{unstable} minimal surface, i.e., a minimal surface contained in a critical set that is not of minimum type.
\end{thm}

In \cite[Section 8]{Morse.1939}, Morse and Tompkins provide an example of a curve $g$ for which $\Omega_{g}$ indeed contains two distinct critical sets of minimum type, so that $g$ then also spans an unstable minimal surface.

While more efficient and more general proofs for the existence of an unstable minimal surface (with respect to more natural topologies than $C^0$) have subsequently been established \cite{Struwe.1988,Dierkes.2010}, including less restrictive assumptions on the boundary curve, the original approach of Morse and Tompkins is notable
for its connections to other areas of mathematics.
As an illustration, we will now sketch a proof of \cref{thm:mountain_pass} using the previously developed machinery, starting with some intermediate results.
As a first step, we will give general formulas for \emph{numbers of births} $\alpha(t) = \sum_{q \in (t, \infty]} \m(t,q)$ and \emph{numbers of deaths} $\omega(t) = \sum_{p \in [-\infty, t)} \m(p,t)$ in q-tame persistence modules.

\begin{lem}
\label{lem:birth_death_formulas}
	Let $M$ be a q-tame persistence module.
	For each $t \in \R$ we either have
	\[
		\alpha(t) = \dim \coker (\colim_{s < t} M_{s} \to \lim_{u > t} M_{u})
	\]
	or both quantities are infinite.
	Similarly, we either have
	\[
		\omega(t) = \dim \ker (\colim_{s < t} M_{s} \to \lim_{u > t} M_{u})
	\]
	or both quantities are infinite.
\end{lem}
\begin{proof}
	None of the quantities appearing in the statement change if we replace $M$ by its radical.
	This radical admits a barcode decomposition and the claims easily follow from an explicit computation on the corresponding barcode module.
\end{proof}

If $M$ is continuous from below or above, then the colimits and limits appearing in the formulas above may simply be replaced by the constituent vector spaces of~$M$.
As a special case, this yields the following corollary.

\begin{cor}
\label{cor:regular_value_no_endpoint}
	Let $M$ be a q-tame persistence module.
	If $M$ is continuous from above and below at $t$, we have
	\[
		\alpha(t) = \omega(t) = 0.
	\]
\end{cor}

We now return to the setting of functions $F$ on metric spaces and prove some more lemmas.
To emphasize which persistence-theoretic notions are relevant, we will work with a general homology theory $\H$ such that $\H(F_{\leq \bullet})$ has certain properties, stated in the lemmas.
In all cases, \v{C}ech homology as used in \cref{thm:mountain_pass} satisfies the necessary conditions.
We start by showing that function values with non-vanishing cap numbers are indeed critical values.

\begin{lem}
\label{lem:endpoint_implies_crit_pt}
	Let $F \colon M \to \R$ be a function on a metric space with compact sublevel set filtration.
	Assume that $\H(F_{\leq \bullet})$ is q-tame and continuous from above, and consider $t \in \R$ and its cap numbers $c^{\epsilon}(t)$.
	If there exists $\epsilon > 0$ such that $c^{\epsilon}(t) > 0$, then $t$ is a critical value of $F$.
\end{lem}
\begin{proof}
	Following \cite[Remark II.6.3]{Struwe.1988}, we know that if $t$ is a regular value, there exists $\epsilon > 0$ such that the inclusion $F_{\leq s} \hookrightarrow F_{\leq t}$ is a homotopy equivalence for all $s \in [t - \epsilon, t]$.
	Thus, $\H(F_{\leq \bullet})$ is continuous from below at every regular value.
	However, we assume $\H(F_{\leq \bullet})$ to be also continuous from above at every value, and in particular at regular values.
	Hence, \cref{cor:regular_value_no_endpoint} implies that $\alpha(t) = \omega(t) = 0$ whenever $t$ is regular, which proves the claim.
\end{proof}

Next, we will analyze how the homology of sublevel sets changes at function values of critical sets of minimum type.

\begin{lem}
\label{lem:minimum_type_implies_some_birth_and_no_death}
	Let $F \colon M \to \R$ be a function on a metric space with compact sublevel set filtration and let $S$ be a critical set of minimum type with value $t$.
	Assume that~$\H$~is additive and that $\H(F_{\leq \bullet})$ is q-tame and continuous from above.
	\begin{enumerate}
		\item The number of births at $t$ satisfies $\alpha(t) \geq \dim \H(S)$.
		\item If there are no homotopically critical points with value $t$ outside $S$, then the number of deaths at $t$ satisfies $\omega(t) = 0$.
	\end{enumerate}
\end{lem}
\begin{proof}
	We start by showing that $S$ is a topological summand of $F_{\leq t}$ in the sense that $F_{\leq t}$ is homeomorphic to the disjoint union $S \sqcup (F_{\leq t} \setminus S)$.
	It suffices to show that $S$ is open and closed in $F_{\leq t}$.
	By definition, there exists a neighborhood $N$ of $\overline{S}$ in $M$ such that the function values of $F$ on $N \setminus S$ exceed $t$.
	In particular, we have $F_{\leq t} \cap N = S$, showing that $S$ is open in $F_{\leq t}$.
	Because $N$ contains $\overline{S}$, we also obtain $F_{\leq t} \cap \overline{S} = S$, showing that $S$ is closed in $F_{\leq t}$.

	Using additivity of $\H$ and \cref{lem:birth_death_formulas}, we now obtain
	\begin{align*}
		\alpha(t) &= \dim \coker \big( \colim_{s < t} \H(F_{\leq s}) \to \lim_{u > t} \H(F_{\leq u}) \big) \\
			&= \dim \coker \big( \colim_{s < t} \H(F_{\leq s}) \to \H(F_{\leq t}) \big) \\
			&= \dim \coker \big( \colim_{s < t} \H(F_{\leq s}) \to \H(F_{\leq t} \setminus S) \oplus \H(S) \big) \\
			&\geq \dim \H(S),
	\end{align*}
	where we have used the assumption that $\H(F_{\leq \bullet})$ is continuous from above for the second equality and the fact that $F_{\leq s} \subseteq (F_{\leq t} \setminus S)$ for all $s < t$ for the final inequality.

	Now, assuming that $S$ is the set of all homotopically critical points with value~$t$ implies that 
	there exists $\epsilon > 0$ such that the inclusion $F_{\leq s} \hookrightarrow (F_{\leq t} \setminus S)$ is in fact a homotopy equivalence for for all $s \in [t - \epsilon, t]$, again following \cite[Remark II.6.3]{Struwe.1988}.
	Hence, again using additivity of $\H$, continuity from above, and \cref{lem:birth_death_formulas}, we obtain that
	\begin{align*}
		\omega(t) &= \dim \ker \big( \colim_{s < t} \H(F_{\leq s}) \to \lim_{u > t} \H(F_{\leq u}) \big) \\
				&= \dim \ker \big( \colim_{s < t} \H(F_{\leq s}) \to \H(F_{\leq t}) \big) \\
				&= \dim \ker \big( \colim_{s < t} \H(F_{\leq s}) \to \colim_{s < t} H(F_{\leq s}) \oplus \H(S) \big) \\
				&= 0
	\end{align*}
	as claimed.
\end{proof}

As the last preparatory result, we will show that connectedness of $M$ implies that the $0$-th essential dimension is trivial for \v{C}ech homology.

\begin{lem}
\label{lem:essential_cech_dim}
	Let $F \colon M \to \R$ be a function on a connected non-empty metric space with compact sublevel set filtration, and let $\cp_{d}$ be the essential dimensions of its q-tame persistent \v{C}ech homology.
	Then $\cp_{0} = 1$.
\end{lem}

\begin{proof}
	Let $\H$ denote singular homology, let $p_{d}$ denote the essential dimensions of the persistent singular homology of $F_{\leq \bullet}$, and let $\CH$ denote \v{C}ech homology.
	Since $M$ is connected, we have $\dim H_{0}(M) = 1$.
	We also have $p_{d} = \dim \colim_{t \to \infty} H_{d}(F_{\leq t}) = \dim H_{d}(M)$, where the second equality holds because all the sublevel sets of $F$ are compact \cite[Proposition 3.33]{Hatcher.2002}.
	Now, the natural map from singular to \v{C}ech homology is always surjective for compact metric spaces in dimension 0 \cite{Eda.2000}, so $\cp_{0} \leq p_{0}$.
	Moreover, we clearly have $\cp_{0} \geq 1$ because $M$ is non-empty.
	In total, we obtain $1 \leq \cp_{0} \leq p_{0} = \dim H_{0}(M) = 1$, which proves the claim.
\end{proof}

We are now ready to give a proof of the Mountain Pass Theorem.

\begin{proof}[Proof of \cref{thm:mountain_pass}]
    First, we observe that $\CH_*(F_{\leq \bullet})$ is q-tame by \cref{t:local connectedness implies q-tameness} because we assume that $F_{\leq \bullet}$ is $\HLC$ for \v{C}ech homology.
    As a consequence, $\CH_*(F_{\leq \bullet})$ has a well-defined persistence diagram by \cref{t:q-tame modules have barcodes}, and thus we can consider cap numbers, births, deaths, etc.
    We write $\cc_d$, $\calpha_d$, $\comega_d$, and $\cp_d$ for the cap numbers, births, deaths, and essential dimensions of $\CH_d(F_{\leq \bullet})$, respectively.
    The sublevel set filtration of $F_{\leq \bullet}$ is compact, its persistent \v{C}ech homology is continuous from above, and $\CH$ is additive, so the previous lemmas are applicable.
	
	Now assume that $F$ has two distinct critical sets $S_{1}$ and $S_{2}$ of minimum type with values $t_{1}$ and $t_{2}$, respectively.
	Since both critical sets are non-empty, we have $\CH_{0}(S_{i}) \neq 0$, and thus the first assertion of \cref{lem:minimum_type_implies_some_birth_and_no_death} implies $\calpha_{0}(t_{i}) \geq \dim \CH_0(S_{i}) \geq 1$ for $i = 1,2$.
	This implies that there exists $\epsilon > 0$ with $\cc_{0}^{\epsilon} \geq \calpha_{0}(t_{1}) + \calpha_{0}(t_{2}) \geq 1 + 1 = 2$.
	Since $M$ is assumed to be connected, we conclude from \cref{lem:essential_cech_dim} that the essential dimension $\cp_{0}$ is $1$.
	We obtain $\comega_{0}^{\epsilon} = \cc_{0}^{\epsilon} - \cp_{0} \geq 2 - 1 = 1$.
	Thus, there must be some $t \in \R$ with $\comega_{0}(t) > 0$, so that in particular $\cc_{0}^{\epsilon}(t) > 0$.
	Thus, we may apply \cref{lem:endpoint_implies_crit_pt} to obtain that the set $S$ of homotopically critical points at value $t$ is non-empty.
	If $S$ were of minimum type, then we would have $\comega_{0}(t) = 0$ by the second assertion of \cref{lem:minimum_type_implies_some_birth_and_no_death}, contradicting the choice of $t$.
	Hence, $S$ cannot be of minimum type, which finishes the proof.
\end{proof}

\subsection{Morse's local connectivity conditions}\label{subsec:historic_hlc}

In order to complete the proof of the Unstable Minimal Surface Theorem, we explain why the Douglas functional is q-tame for \v{C}ech homology (\cref{prop:douglas_hlc}), and we discuss the issue with Morse's approach to q-tameness in \cite[Theorem 6.3]{Morse.1940}.

Throughout his work on functional topology, Morse assumed slightly varying forms of local connectivity on the resulting sublevel set filtrations in order to obtain \mbox{q-tameness}.
In particular, Morse and Tompkins used the following condition from \cite{Morse.1938,Morse.1940} in their application to minimal surface theory:
\begin{displaycquote}[p.~431]{Morse.1940}
	Let $p$ be a point of $M$ at which $F(p)=c$.
	The space $M$ is said to be \emph{locally $F$-connected} of order $r$ at~$p$ if corresponding to each positive constant $e$ there exists a positive constant $\delta$ such that each singular $r$-sphere on the $\delta$-neighborhood of $p$ and on $F_{c+\delta}$ bounds an $(r+1)$-cell of norm $e$ on $F_{c+e}$.
\end{displaycquote}
See also \cite[p.~ 25]{Morse.1938} and \cite[p.~464]{Morse.1939}, but note that the definitions given there contain evident typographical errors.
Expressed in similar language as \cref{s:connectivity}, the property of local $F$-connectedness of all orders is equivalent to the following notion, applicable to general topological spaces.

\begin{defi}
	The sublevel set filtration of a function $f \colon X \to \R$ is said to be \emph{weakly locally connected of all orders}, or \emph{weakly $\piLC$}, if for any $x \in X$, any neighborhood $V$ of $x$, and any index $t > f(x)$, there is an index $s$ with $f(x) < s < t$ and a neighborhood $U$ of $x$ with $U \subseteq V$ such that the inclusion $f_{\leq s} \cap U \hookrightarrow f_{\leq t} \cap V$ induces trivial maps on all homotopy groups.
\end{defi}

Morse then goes on to claim that the persistent \v{C}ech homology of this sublevel set filtration is q-tame, provided that $F$ is bounded from below and satisfies the assumptions of local $F$-connectivity and compactness of sublevel sets.
In the original (where the function is assumed to take values in $[0,1)$) the claim reads:
\begin{displaycquote}[Theorem 6.3, p.~432]{Morse.1940}
	Let $a$ and $c$ be positive constants such that $a < c < 1$.
	The $k^{\mathrm{th}}$ connectivity $R^k(a,c)$ of $F_a$ on $F_c$ is finite.
\end{displaycquote}
Morse does not prove this statement in the given reference, but rather refers to \cite[Theorem~6.1]{Morse.1938}.
Unfortunately, the above claim does not hold in general, as exemplified by the sublevel set filtration from \cref{e:counterexample}.
To elaborate on this, we consider a stronger version of weak local connectedness.

\begin{defi}
	The sublevel set filtration of a function $f \colon X \to \R$ is said to be \emph{weakly locally contractible}, or \emph{weakly $\LC$}, if for any $x \in X$, any neighborhood $V$ of $x$, and any index $t > f(x)$, there is an index $s$ with $f(x) < s < t$ and a neighborhood $U$ of $x$ with $U \subseteq V$ such that the inclusion $f_{\leq s} \cap U \hookrightarrow f_{\leq t} \cap V$ is homotopic to a constant map.
\end{defi}

Clearly, being weakly $\LC$ implies being weakly $\piLC$ and, if the homology $\H$ takes finite dimensional values on one-point spaces, also weakly $\HLC$.
Observe that our discussion in \cref{e:counterexample} actually establishes that the filtration given there is weakly $\LC$, so not even the weak $\LC$ condition is sufficient to ensure the q-tameness of compact sublevel set filtrations that are induced by non-continuous functions in general.
In particular, our construction invalidates Morse's claim quoted above:

\begin{cor} \label{c:counterexample}
	The function $f \colon \HE \to \R$ with value~$0$ at the origin and $1$ elsewhere defines a weakly $\LC$ compact sublevel set filtration that is not q-tame with respect to either singular or \v{C}ech homology.
\end{cor}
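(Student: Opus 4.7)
The plan is very short because this corollary is essentially the specialization of \cref{t:counterexample} to two specific homology theories, so the bulk of the work is already done. All that remains is to verify the hypothesis of \cref{t:counterexample}, namely that there exists some $n$ for which $\H_n(\HE)$ is infinite dimensional, both in the singular and the \v{C}ech case. Given that verification, the corollary follows immediately by applying \cref{t:counterexample}.

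For singular homology, I would not reprove the infinite-dimensionality of $H_d(\HE)$ from scratch and would instead invoke the classical result of Barratt and Milnor (cited in the excerpt as \cite{Barratt.1962}), which establishes precisely this fact in degree $d$. This takes care of the singular case in one line.

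For \v{C}ech homology, the argument is the continuity-type computation already outlined in the paragraph preceding the corollary. The key idea is to express $\HE$ as a nested intersection $\bigcap_k \HE_k$ of compact Hausdorff spaces, where $\HE_k$ is obtained from $\HE$ by filling in the $k\th$ largest sphere. Since \v{C}ech homology commutes with inverse limits of totally ordered systems of compact Hausdorff spaces (by the results cited from \cite{Eilenberg.1952}), one has $\CH_d(\HE; \F) = \lim_k \CH_d(\HE_k; \F)$. Each $\HE_k$ is a finite CW-complex whose $d\th$ cellular homology is a product of copies of $\F$ indexed by the unfilled spheres, and the bonding maps in the inverse system are projections onto fewer factors. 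The inverse limit is therefore $\prod_{n \in \N} \F$, which is infinite dimensional over $\F$.

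Having verified the hypothesis of \cref{t:counterexample} for both homology theories, the corollary follows directly: the sublevel set filtration of $f$ is compact and weakly $\LC$ by \cref{t:counterexample}, and fails to be q-tame because the identity map on $\HE$ (which represents the structure map $f_{\leq s} \to f_{\leq t}$ for $s, t \geq 1$) has infinite-rank image on $\H_d$ in each case. The only real content of the argument is the infinite-dimensionality computation for \v{C}ech homology, and even there the main obstacle is already handled by the continuity theorem from \cite{Eilenberg.1952}; the rest is bookkeeping.
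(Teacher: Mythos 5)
Your proposal matches the paper's argument exactly: invoke \cref{t:counterexample}, then verify the infinite-dimensionality hypothesis by citing Barratt--Milnor for singular homology and by the continuity-of-\v{C}ech-homology computation via the filled-sphere approximations $\HE_k$, arriving at $\prod_{n \in \N} \F$ as the inverse limit. This is the same route the paper takes; nothing to add or correct.
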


This counterexample reveals a gap in the argument of Morse and Tompkins, as the sublevel set filtration of $A_g$ is not actually shown to be q-tame.
Fortunately, this gap can be readily fixed by applying \cref{t:local connectedness implies q-tameness}.
This is because the proof given in \cite[Theorem 7.2, p.464]{Morse.1939} for the local connectivity of the sublevel set filtration induced by $A_g$ can actually be seen to establish a stronger property, described next.

\begin{defi}
	The sublevel set filtration of a function $f \colon X \to \R$ is said to be \emph{locally contractible} or \emph{$\LC$} if for any $x \in X$, any neighborhood $V$ of $x$ and any pair of indices $f(x) < s < t$ there is a neighborhood $U \subseteq V$ of $x$ such that the inclusion $f_{\leq s} \cap U \hookrightarrow f_{\leq t} \cap V$ is homotopic to a constant map.
\end{defi}

\begin{prop}[{\cite[p.464]{Morse.1939}}]
\label{prop:douglas_hlc}
    The sublevel set filtration of the Douglas functional $A_g \colon \Omega_g \to \R$ is $\LC$, and in particular $\HLC$.
\end{prop}

We can now finally prove the Unstable Minimal Surface Theorem.

\begin{proof}[Proof of \cref{thm:unstable_minimial_surface}]
    The sublevel set filtration of $A_g$ is compact according to \cite[p.~448]{Morse.1939}.
    Moreover, $\Omega_g$ is contractible by \cite[Theorem 4.3]{Morse.1939}.
    The sublevel set filtration of $A_g$ is also $\HLC$ for \v{C}ech homology according to \cref{prop:douglas_hlc}, so \cref{thm:mountain_pass} applies to $A_g$.
    Since any homotopcially critical point of $A_g$ corresponds to a minimal surface spanned by $g$ \cite[Theorem 6.2]{Morse.1939}, this implies the claim. 
\end{proof}

\begin{rem}
Morse introduced another condition three years earlier, which he also called local $F$-connectivity.
It roughly corresponds to being $\piLC$ with a certain added uniformity property.
In the original it reads:
\begin{displaycquote}[p.421--422]{Morse.1937}
	The space $M$ will be said to be locally $F$-connected for the order $n$ if corresponding to $n$, an arbitrary point $p$ on $M$, and an arbitrary positive constant $e$, there exists a positive constant $\delta$ with the following property.
	For $c \geq F(p)$ any singular $n$-sphere on $F \leq c$ (the continuous image on $F \leq c$ of an ordinary $n$-sphere) on the $\delta$-neighborhood $p_{\delta}$ of $p$ is the boundary of a singular $(n + 1)$-cell on $F \leq c + e$ and on $p_e$.
\end{displaycquote}
Morse also claims in the given reference that this condition is sufficient for q-tameness, but without providing a proof.
Whether this statement is true or not is not covered by our analysis, because the $\piLC$ and $\HLC$ conditions generally do not imply each other.
We expect the quoted claim to be true, but do not investigate it further.
\end{rem}

	\appendix
	
\section{Vietoris and \texorpdfstring{\v{C}}{}ech homology for compact metric spaces} \label{s:vietoris}

For greater generality, in this paper we consider \v{C}ech homology instead of the homology construction used by Morse in functional topology: metric Vietoris homology.
We now justify this choice by reminding the reader that these two constructions agree on \emph{compact} metric spaces.
We follow \cite{Dowker.1952} in our discussion; for an earlier approach, the reader may also consult \cite[Section VII.6]{Lefschetz.1942}.

First, let us recall the definition of \v{C}ech homology as presented for example by \cite[Section~IX--X]{Eilenberg.1952}.
Let $X$ be a topological space and let $\Cov(X)$ be the set of all open covers of $X$ ordered by the refinement relation, which we implicitly regard as a poset category.
Recall that for an open cover $\alpha \in \Cov(X)$ its \emph{nerve} $\Nrv(\alpha)$ is defined as the simplicial complex
\begin{equation*}
\Nrv(\alpha) =
\big\{ \beta \subseteq \alpha \mid \beta \neq \emptyset \text{ is finite and } \textstyle{\bigcap_{U \in \beta}} \, U \neq \emptyset \big\}.
\end{equation*}
For any field $\F$, the nerve construction composed with the functor of simplicial homology with coefficients in $\F$ defines a functor from $\Cov(X)$ to the category of graded $\F$-vector spaces.
The \emph{\v{C}ech homology with coefficients in $\F$} of $X$ is defined as
\begin{equation*}
\CH(X; \F) \ =
\lim_{\alpha \in \Cov(X)} H(\Nrv(\alpha); \F).
\end{equation*}
As an alternative to the nerve construction, for a cover $\alpha \in \Cov(X)$ one can define $\Vietoris(\alpha)$ as the simplicial complex
\begin{equation*}
\Vietoris (\alpha) = \left\{ \sigma \subseteq X \mid \sigma \neq \emptyset \text{ is finite and } \sigma \in U \text{ for some } U \in \alpha \right\}.
\end{equation*}
This construction is dual to the nerve construction in the sense of Dowker's Theorem \cite{Dowker.1952}, which asserts that the two complexes $\Nrv (\alpha)$ and $\Vietoris (\alpha)$ are homotopy equivalent after geometric realization.
As a consequence, we have that $H (\Nrv (\alpha); \F) \cong H (\Vietoris (\alpha); \F)$.
This isomorphism is natural with respect to refinement of covers, so we get an alternative description of \v{C}ech homology as
\begin{equation*}
\CH (X; \F) \ \cong
\lim_{\alpha \in \Cov (X)} H (\Vietoris (\alpha); \F).
\end{equation*}

If $X$ is an arbitrary metric space, this is still not exactly the same as the construction of \emph{metric} Vietoris homology, as originally defined by Vietoris \cite{Vietoris.1927} and used by Morse, which in modern notation is the limit over all covers of $X$ by metric $\delta$-balls,
\begin{equation*}
\lim_{\alpha \in \Balls(X)} H (\Vietoris (\alpha); \F),
\end{equation*}
where
\begin{equation*}
\Balls (X) = \left\{ ( B_{\delta} (x) )_{x \in X} \mid \delta > 0 \right\}
\subseteq \Cov (X).
\end{equation*}
If the metric space $X$ is compact, then for each cover $\alpha$ there exists $\lambda > 0$ by Lebesgue's number lemma \cite[Lemma 27.5]{Munkres.2000} such that $(B_{\lambda}(x))_{x \in X}$ refines $\alpha$.
In other words, if the metric space $X$ is compact, then $\Balls (X)$ is coinitial in $\Cov (X)$, that is to say, they yield the same limit.
Thus, in this case we have a natural isomorphism
\begin{equation*}
\CH (X; \F) \ \cong \,
\lim_{\alpha \in \Balls(X)} H (\Vietoris (\alpha); \F)
\end{equation*}
In other words, for compact metric spaces the metric Vietoris homology theory employed in Morse's setting is isomorphic to the \v{C}ech homology considered in this paper.
Note that the relevant isomorphisms above can all be seen to be natural with respect to continuous maps (see in particular \cite[Lemma 7a]{Dowker.1952}), so that in particular both homology constructions yield the same persistence modules when applied to sublevel set filtrations.
In the setting of compact metric spaces, the well-known defect of \v{C}ech homology not having long exact sequences for pairs of spaces also disappears if one uses field coefficients, which is necessary anyway to get persistence diagrams.

\section*{Acknowledgements}

U.B. has been supported by the German Research Foundation (DFG) through the Collaborative Research Center SFB/TRR 109 \emph{Discretization in Geometry and Dynamics}.

A.M-M. acknowledges financial support from Innosuisse grant \mbox{32875.1 IP-ICT-1} and the hospitality of the Laboratory for Topology and Neuroscience at EPFL.

M.S. has been supported by the German Research Foundation (DFG) through	the Cluster of Excellence EXC-2181/1 \emph{STRUCTURES}, and the Research Training Group RTG 2229 \emph{Asymptotic Invariants and Limits of Groups and Spaces}.
	\sloppy
	\printbibliography
	\todos
\end{document}